\documentclass{article}

\usepackage{graphicx} 
\usepackage[utf8]{inputenc}
\usepackage{float}
\usepackage{url}
\usepackage[margin =1in]{geometry}
\usepackage{amssymb,amsthm}
\usepackage{thm-restate}
\usepackage{amsmath}
\usepackage{enumerate}
\usepackage{subcaption}
\usepackage{algorithm2e} 
\LinesNumbered
\usepackage{cite}
\usepackage[noend]{algpseudocode}
\usepackage{breqn}
\usepackage{hyperref}
\usepackage{cleveref}

\newtheorem{theorem}{Theorem}

\hypersetup{
    colorlinks=true,
    linkcolor=blue,
    filecolor=magenta,      
    urlcolor=cyan,
    pdftitle={Overleaf Example},
    pdfpagemode=FullScreen,
    }
\usepackage{bbm}

\def\Reals{\mathbb{R}}
\def\one{\mathbbm{1}}

\def\cB{\mathcal{B}}
\def\cD{\mathcal{D}}
\def\cP{\mathcal{P}}
\def\cY{\mathcal{Y}}

\def\vec{\mathrm{vec}}
\def\mat{\mathrm{mat}}
\def\Egk{\mathbb{E}_{I_k}[\tilde{g}^k]}
\def\msperp{M_{S^\perp}}

\newtheorem{definition}{Definition}

\newtheorem{proposition}{Proposition}
\newtheorem{assumption}{Assumption}
\urlstyle{same}

\title{Avoiding Geometry Improvement in Derivative-Free Model-Based Methods via Randomization}
\author{Matt Menickelly}

\begin{document}

\maketitle

\begin{abstract}
    We present a technique for model-based derivative-free optimization called \emph{basis sketching}. 
    Basis sketching consists of taking random sketches of the Vandermonde matrix employed in constructing an interpolation model. 
    This randomization enables weakening the general requirement in model-based derivative-free methods that interpolation sets contain a full-dimensional set of affinely independent points in every iteration.
    Practically, this weakening provides a theoretically justified means of avoiding potentially expensive geometry improvement steps in many model-based derivative-free methods. 
    We demonstrate this practicality by extending the nonlinear least squares solver, \texttt{POUNDers} to a variant that employs basis sketching and we observe encouraging results on higher dimensional problems. 
\end{abstract}

\section{Introduction}
Derivative-free optimization (DFO) is an important and practical class of nonlinear optimization characterized by an assumption that derivatives of an objective function (and/or constraint functions) cannot be directly computed. 
Instead, it is assumed that one has access only to a black box oracle for computing the objective (and/or constraint) value(s). 
There exists an abundance of methods designed for derivative-free optimization, see e.g., the survey \cite{LMW2019AN}. 

In this paper, we focus on a particular subclass of these methods, in particular, model-based DFO trust-region methods for unconstrained optimization:
\begin{equation}
    \label{eq:unconstrained}
    \displaystyle\min_{x\in\Reals^n} f(x)
\end{equation}
for some $f(x):\Reals^n\to\Reals$. 
Such methods employ interpolation models of $f(x)$ as a proxy for a first- (or possibly second-) order Taylor model within a trust-region framework. 
While the convergence analysis of these methods and a formal analysis of interpolation model quality was largely pioneered by \cite{Conn1997b, Conn2006a, Conn2006b, Conn2009} and is most of the subject of the textbook \cite{Conn2009a},
the implementation and practical use of this subclass of methods was very much driven by Michael Powell, and forms the backbone of many of his well-known DFO solvers \cite{Powell2002a, Powell2003a, Powell2004a, Powell2004b, MJDP06, MJDP07b, Powell2009a, Powell2012convergence, Powell2013beyond}. 

It is well-known that methods within this subclass do not theoretically (or practically) scale well with the dimension $n$.
Intuitively,
in order to approximate a gradient $\nabla f(x)$, one must compute $n$ directional derivatives centered at $x$, which requires $\mathcal{O}(n)$ function evaluations. 
This linear dependence on $n$, barring some very restrictive assumptions, is inevitable; see
\cite{LMW2019AN}[Table 8.1] for a recent summary of worst case complexity results for methods in this subclass exhibiting this dependence on dimension. 

The intention of this paper is to suggest a framework for \emph{practically} alleviating this $\mathcal{O}(n)$ dependence. 
We do this by taking a recognizable framework (model-based trust-region methods) and \emph{randomizing} the construction of interpolation models by only updating an average model gradient and average model Hessian within a random (but realized from a judiciously selected distribution) subspace in each iteration. 
The inspiration for such a procedure draws heavily from \texttt{SEGA} \cite{hanzely2018sega}, which employs \emph{gradient sketches} to update an average gradient estimator. 
The name \texttt{SEGA} is intentionally close to \texttt{SAGA} \cite{DefazioBL14}, which can be viewed as a particular sketch of the summands of a finite sum, as opposed to a sketch of an $n$-dimensional gradient (see \cite{gower2021stochastic}).
Analogously, the contribution of this paper is an analogue of our recent work in \texttt{SAM-POUNDers} \cite{SAMP2022}, which randomly sketched a finite sum in a nonlinear least-squares problem by building on a model-based trust-region method for nonlinear least-squares problems, \texttt{POUNDers} \cite{SWCHAP14}. 

\subsection{Related Work}\label{sec:related}
Owing to practical relevance, there exists a body of literature concerning the scalability of DFO methods. 
Most notably, \cite{CRsubspace2021} analyzed a trust-region framework that iteratively selects a low-dimensional subspace, constructs an interpolation model \emph{only intended to be a reasonable approximation of an objective function on that subspace}, and then minimizes the low-dimensional model within a trust region.
By choosing the low-dimensional subspace according to constructions based on Johnson-Lindenstrauss transforms (see, e.g., \cite{element-JL}), \cite{CRsubspace2021} demonstrates a high probability worst case complexity result that is \emph{independent of $n$}. 
Such a result is encouraging in that it lends credence to methods based on iterative subspace selection, and motivates further exploration of methods such as the one in this paper. 
Prior work in subspace model-based methods (but without such rigorous convergence guarantees) includes \texttt{VXQR} \cite{Neumaier2011} and work with moving ridge functions \cite{gross2022optimization}. 

Other classes of DFO methods besides model-based methods have also considered issues of scalability. 
In particular, the class of direct search methods (i.e., methods based on making direct comparisons between pairs of function evaluations as opposed to constructing models, see \cite{AudetHare2017} for a textbook treatment) can intuitively always choose to evaluate fewer than $n$ poll points per iteration. 
Probabilistic convergence results for such randomized direct search methods are given in \cite{bergou2020stochastic, Gratton2015, Gratton2017direct}. 

We comment on the existence of additional DFO methods for large-scale problems outside of general model-based or direct search methods. 
There exist methods that make particular assumptions (and then exploit) some known structure of the problem, including partial separability \cite{Colson2005ops} or sparse Hessians \cite{Bandeira11}.
Others methods, based on a derivative-free method popularized by the analysis of Nesterov \cite{Nesterov2015}, attempt to only approximate one directional derivative per iteration; however, it is common knowledge among practitioners that these methods are practically very inefficient in terms of function evaluation complexity, see \cite{BCCS2021a} for a critical view.

Finally, there exists a large and growing body of related work in sketching in \emph{derivative-based} methods.
In the context of sketching for dimensionality reduction to yield subspace methods akin to those in \cite{CRsubspace2021}, 
sketching has been applied to Newton's method \cite{pilanci2017newton, gower2019rsn, berahas2020investigation}, quasi-Newton methods \cite{gower2016stochastic}, \texttt{SAGA} \cite{gower2021stochastic}, trust-region methods \cite{cartis2022randomiseda}, quadratic regularization methods \cite{cartis2022randomised}, and cubic regularization methods \cite{hanzely2020stochastic}.
For a distinct class of methods, (randomized) coordinate descent methods and their block variants employ directional derivative information in (randomized) coordinate directions.
While not subspace methods, or sketching methods, in exactly the terms we have used so far, coordinate descent methods effectively update a subset of variables parameterizing a coordinate-aligned subspace in each iteration. 
The structure of many problems in machine learning naturally lends itself to these methods. 
As a result, the literature on coordinate descent methods is vast, and so we only provide references to two good surveys of the topic, \cite{shi2016primer, wright2015coordinate}. 

\subsection{Motivation}
We are especially motivated by computationally expensive problems. 
In particular, and without any further quantification, we will assume that the cost (in time, energy consumption, dollar cost, or whatever relevant metric) of computing a single function evaluation $f(x)$ far exceeds the cost in the same metric of relevant linear algebraic procedures which are polynomial in $n$ (e.g., QR decompositions or solving trust region subproblems). 

We are additionally motivated by our recent work in \texttt{SAM-POUNDers} \cite{SAMP2022}.
\texttt{SAM-POUNDers} is motivated by problems in nuclear model calibration \cite{BMNORW2020}; 
these problems are naturally formulated as derivative-free (and potentially computationally expensive) nonlinear least squares problems of the form
\begin{equation}
    \label{eq:nonlsq}
    \displaystyle\min_{x\in\cD} \sum_{i=1}^p f_i(x)^2,
\end{equation}
where $\cD$ is defined by bound constraints. 
Unlike most assumptions in the literature, however, it is often the case in nuclear model calibration that most, if not all, of the individual expensive function evaluations $f_i(x)$ in \eqref{eq:nonlsq} can be performed in parallel. 
\texttt{SAM-POUNDers} exploits this by maintaining \emph{separate} local interpolation models of each $f_i(x)$, with each generally employing different interpolation sets. 
Most importantly, and like in a stochastic average gradient method (e.g. \texttt{SAG} \cite{Schmidt2013} or \texttt{SAGA} \cite{DefazioBL14}), not all of the $p$ local models of the function $f_i(x)$ in \eqref{eq:nonlsq}  will be updated in every iteration of \texttt{SAM-POUNDers}. 
Thus, in such a method, we must always have access to a (potentially stale) local model of each component function $f_i(x)$ that models $f_i(x)$ \emph{on the full $n$-dimensional space}.
As described in \Cref{sec:related}, a subspace method does not aim to provide such a model, electing instead to build a model of a function on a random subspace and then immediately discard the model for the next iteration. 
With this aim in mind, this paper will maintain a model -- in particular, a quadratic model defined by the \emph{average gradient} and \emph{average Hessian}) -- between iterations without discarding the model.
Moreover, having such an estimator of full-space gradient information can also aid in defining more practical stopping criteria, which is an identified shortcoming in subspace methods. 

\subsection{Contributions and Organization}
In this paper, we will present a method called \emph{basis sketching}.
Basis sketching maintains a running \emph{average estimator} of both the gradient and Hessian of $f(x)$, which is employed to compute a set of specifically and stochastically weighted estimators of the gradient and Hessian, called the \emph{ameliorated estimators}. 
While we will present a general framework for this method in \Cref{alg:bask}, we will specifically implement this algorithm using the model-building routines of \texttt{POUNDers} as a foundation. 
We will demonstrate through numerical results a clear advantage of the randomized variant presented in this paper over the original implementation of \texttt{POUNDers} on problems of sufficiently high dimension. 

We begin in \Cref{sec:interp} with preliminaries on the interpolation models that we will use in this work. 
Then, in \Cref{sec:estimators}, we will discuss the particular estimators we will employ that are derived from our interpolation models. 
In \Cref{sec:bask}, we will present the basis sketching algorithm, and will immediately afterwards discuss practical modifications in \Cref{sec:practical}. 
We conclude with a presentation of numerical results in \Cref{sec:numerical}, which are intended to show that basis sketching offers improvement over the standard (deterministic) geometry improvement and model improvement steps made in \texttt{POUNDers}. 

\section{Preliminaries on Interpolation Models}\label{sec:interp}
We begin with a discussion of general nonlinear interpolation models in \Cref{sec:gen_interp}, and we will then restrict ourselves to a discussion of underdetermined interpolation models in \Cref{sec:underdetermined}.  

\subsection{General Nonlinear Interpolation}\label{sec:gen_interp}
Denote by $\cP^d_n$ the space of polynomials of degree less than or equal to $d$ in $\Reals^n$.
We say that a polynomial $m(x)\in\cP^d_n$ \emph{interpolates} the function $f(x)$ at a point $y$ provided $m(y)=f(y)$. 
Suppose we have a set of points $Y = \{y^0,y^1,\dots,y^p\}\subset\Reals^n$. 
To find a polynomial $m(x)\in\cP^d_n$ that interpolates $f(x)$ at each point in $Y$, 
one can choose a basis for $\cP^d_n$, which we denote by $\Phi = \{\phi_0,\phi_1,\dots,\phi_q\}$, 
and solve for the coefficients $\alpha$ in the system

\begin{equation}
\label{eq:get_coeffs}
M(\Phi, Y)\alpha_\Phi = f(Y),
\end{equation}
where we have written 

\begin{equation*}
M(\Phi, Y) = \left[
\begin{array}{cccc}
 \phi_0(y^0) & \phi_1(y^0) & \cdots & \phi_q(y^0)\\
 \phi_0(y^1) & \phi_1(y^1) & \cdots & \phi_q(y^1)\\
 \vdots & \vdots & \vdots & \vdots\\
 \phi_0(y^p) & \phi_1(y^p) & \cdots & \phi_q(y^p)\\
 \end{array}
 \right], 
 \alpha_\Phi = 
\left[
\begin{array}{c}
\alpha_0\\
\alpha_1\\
\vdots\\
\alpha_p\\
\end{array}
\right]
\text{ and }
 f(Y) = 
 \left[
 \begin{array}{c}
 f(y^0)\\
 f(y^1)\\
 \vdots\\
 f(y^p)\\
 \end{array}
 \right].
 \end{equation*}
 We refer to $M(\Phi, Y)$ as the \emph{Vandermonde matrix}. 
If the Vandermonde matrix $M(\Phi, Y)$ is square (i.e., $p=q$, where $q + 1$ is the dimension of $\cP^d_n$) and additionally, $M(\Phi, Y)$ is full rank, then \eqref{eq:get_coeffs} has a unique solution, that is, there exists a unique polynomial $m(x)\in\cP^d_n$ 
given by
$$m(x) = \displaystyle\sum_{j=0}^q \alpha_j^* \phi_j(x)$$
that interpolates $f(x)$ at each point in $Y$;
this unique polynomial is defined by the solution $\alpha^*$ to $M(\Phi, Y)\alpha = f(Y)$. 
We record this in a definition:

\begin{definition}
\label{def:poised}
Given a basis $\Phi$ of dimension $p+1$ for a polynomial space $\cP$ , and a set of points $Y=\{y^0,y^1,\dots,y^p\}$, we say that $Y$ is \emph{poised for interpolation with respect to $\cP$} provided $M(\Phi,Y)$ is invertible. 
\end{definition}

A common and practical choice of basis for $\cP^d_n$ is given by 
$$\Phi_L(y) = \{1, y_1, y_2, \dots, y_n\}$$
when $d=1$ and
$$\Phi_Q(y) = \Phi_L(y) \cup \left\{\frac{y_1^2}{2},\dots,\frac{y_n^2}{2},\frac{y_1 y_2}{\sqrt{2}}, \dots, \frac{y_1 y_n}{\sqrt{2}}, \frac{y_2 y_3}{\sqrt{2}}, \dots, \frac{y_2 y_n}{\sqrt{2}}, \dots, \frac{y_{n-1} y_n}{\sqrt{2}}\right\}$$
when $d=2$. 
In the setting of expensive derivative-free optimization, it is often the case when we employ quadratic models (i.e., $d=2$) that $q = (n+1)(n+2)/2$ function evaluations at points $Y$ -- which would guarantee $M(\Phi_Q, Y)$ is invertible -- are simply not available. 
Moreover, the conditioning of the matrix $M(\Phi, Y)$ must be bounded, as model errors essentially scale linearly with this condition number;
this will be seen, for instance, in \Cref{thm:sfullylinear}.
In model-based DFO, this notion of bounded condition numbers is often formalized through the language of Lagrange polynomials and well-poisedness.
For simplicity in this mansucript, and as we will see in \Cref{thm:sfullylinear}, we will only seek to ensure poisedness with respect to a transformation of the basis $\Phi_L$, which effectively amounts to bounding $\|S\tilde{Y}_{\Delta}\|^{-1}$ where 
$S\in\Reals^{p\times n}$ with $p \leq n$, and where the $i$th column of the $n\times p$ matrix $\tilde{Y}_{\Delta}$ is given by $(y^i - y^0)/\Delta$ for some $\Delta > 0$. 
The fact that $p \leq n$ implies that $S$ is a sketching matrix, hence giving us the term \emph{basis sketching} for the mechanism we introduce in this manuscript. 

Finally, we remind the reader of the definition of \emph{full linearity}, a common (see, e.g., \cite{Conn2009a}[Definition 6.1]) measure of model quality in model-based DFO. 
\begin{definition}
    \label{def:full_linearity}
    Given $\kappa_{ef}, \kappa_{eg} \geq 0$ and $\Delta>0$,
    a model $m(x)$ is a $(\kappa_{ef},\kappa_{eg})$-fully linear model of $f(x)$ on a ball of radius $\Delta$ centered at $x_c$ (i.e., the ball $\cB(x_c; \Delta) := \{y: \|y-x_c\|\leq\Delta\}$) provided
    $$|m(x) - f(x)| \leq \kappa_{ef}\Delta^2 \quad \text{ and } \quad \|\nabla m(x) - \nabla f(x)\| \leq \kappa_{eg}\Delta$$
    for all $y\in \cB(x_c; \Delta)$. 
\end{definition}

Intuitively, \Cref{def:full_linearity} captures the idea that, up to constants, the error of a general model $m(x)$ of $f(x)$ is no worse than the error made by employing a first-order Taylor model of $f(x)$. 

\subsection{Underdetermined Interpolation Systems}\label{sec:underdetermined}
The idea of basis sketching is largely motivated by the idea of minimal norm Hessian (MNH) interpolation introduced in \cite{SW08}.
In the notation established here, MNH assumes the basis $\Phi_Q$ is employed, and partitions the matrix $M(\Phi_Q, Y)$ into two submatrices 
$$M(\Phi_Q,Y) = \left[ M(\Phi_L, Y) \quad M(\Phi_Q \setminus \Phi_L, Y) \right], $$
where ``$\setminus$" denotes the set difference operation. 
The MNH approach replaces \eqref{eq:get_coeffs} with an optimization problem
\begin{equation}
    \label{eq:mnh}
    \displaystyle\min_{\alpha\in\Reals^{n+1}, \beta\in\Reals^{n(n+1)/2}}\left\{\frac{1}{2}\|\beta\|^2 : M(\Phi_L, Y)\alpha +  
    M(\Phi_Q \setminus \Phi_L, Y)\beta = f(Y) \right\}.
\end{equation}
In words, \eqref{eq:mnh} removes the degrees of freedom from the underdetermined linear system in the constraint of \eqref{eq:mnh} by choosing the basis coefficients corresponding to degree two polynomials in the basis such that these coefficients are of minimal norm. 
The work in \cite{SW08} uses the KKT conditions of \eqref{eq:mnh} to develop a method based on an iteratively updated QR factorization of $M(\Phi_L, Y)$ to choose a set $Y$ such that $M(\Phi_L, Y)$ is well-conditioned. 
The selection of $Y$ in the QR-based method of \cite{SW08} greedily selects points from a bank of points for which $f$ has previously been evaluated, and then adds points to $Y$ (determined from the nullspace suggested by the QR factorization) for which function evaluations must be performed only after the greedy procedure has been run. 

Basis sketching will mimic this greedy QR-based procedure of \cite{SW08}, but will first modify the subproblem \eqref{eq:mnh} with a sketch.
In particular, given a sketching matrix $S\in\Reals^{p\times n}$ with mutually orthonormal rows $s_1,\dots, s_p$, we define a basis 
$$\Phi_S = \left\{1, s_1 y, \dots, s_p y\right\}$$
for a subspace of $\cP^1_n$. 
Because we assume $S$ has mutually orthornormal rows, we can choose a perpendicular matrix $S^\perp\in\Reals^{(n-p)\times n}$ satisfying
$s_j^\perp s_i^\top = 0$ for $i=1,\dots, p$ and $j=1,\dots, n-p$, where $j$ indexes the rows of $S^\perp$. 
This gives us a basis for an orthogonal subspace of $\cP^1_n$,
$$\Phi_{S^\perp} = \left\{s^\perp_1 y, \dots, s^\perp_{n-p} y \right\}.$$

With this notation, our subproblem of interest is 
\begin{equation}
    \label{eq:bask}
    \displaystyle\min_{\alpha\in\Reals^{p + 1}, \beta\in\Reals^{n(n+2)/2},  \gamma\in\Reals^{n-p}}\left\{ \frac{1}{2}\|\beta\|^2 + \frac{1}{2}\|\gamma\|^2 : 
    M(\Phi_S, Y)\alpha + M(\Phi_{S^\perp}, Y)\gamma + M(\Phi_Q \setminus \Phi_L, Y)\beta = f(Y) \right\}.
\end{equation}
In words, \eqref{eq:bask} fits an underdetermined quadratic model by limiting the degrees of freedom not only by minimizing the contribution from the degree two polynomials in the basis, but also from the subspace of $\cP^1_n$ spanned by $\Phi_{S^\perp}$. 

To motivate future development in this manuscript, we additionally mention that the subproblem \eqref{eq:mnh} is trivially extended to a problem of identifying a minimal \emph{change} Hessian, that is, replacing the objective with $\frac{1}{2}\|\beta - \bar\beta\|^2$, where $\bar\beta$ denotes the coefficients on $\Phi_Q \setminus \Phi_L$ taken from, for instance, the model Hessian employed in a previous iteration of an optimization algorithm. 
In fact, this trivial extension of \eqref{eq:mnh} is what is employed in \texttt{POUNDers} \cite{SWCHAP14} for the purpose of model-building. 
Any method for solving \eqref{eq:mnh} can be extended to solving the minimal change problem simply by replacing each entry of the right hand side of \eqref{eq:mnh} with a residual term
$$f(y^i) - \displaystyle\sum_{j=1}^{\frac{n(n+1)}{2}} \bar\beta_j \phi_j(y^i),$$
where the $n(n+1)/2$ basis functions $\phi_j$ are from $\Phi_Q \setminus \Phi_L$. 
Similarly, if we replace the objective of \eqref{eq:bask} with 
$\frac{1}{2}\|\beta - \bar\beta\|^2 + \frac{1}{2}\|\gamma - \bar\gamma\|^2$,
then any method for solving \eqref{eq:bask} can be extended to this modified subproblem by replacing each entry in the right hand side of \eqref{eq:bask} with the appropriate residual term.

Our proposed basis sketching method is essentially a \emph{sketch-and-project} process \cite{gower2015randomized}.
Basis sketching will maintain an \emph{average estimate} of the gradient 
$\bar{g}\in\Reals^n$. 
On a given iteration of our basis sketching method, we will, in general, not construct a fully linear model of $f$, 
but will instead construct a model that is only fully linear when restricted to a particular subspace of $\Reals^n$ defined by a matrix $S$. 
We formalize this with an extension of the full linearity definition \Cref{def:full_linearity} which we call $S$-full linearity. 
While our definition of $S$-full linearity is similar to the definition of $Q$-full linearity suggested in recent works in subspace methods for DFO \cite{CRsubspace2021, STARS2022}, note that they are not quite the same.
\begin{definition}
    \label{def:s_full_linearity}
    Given $S\in\Reals^{p\times n}$ with $p\leq n$, constants
    $\kappa_{ef}, \kappa_{eg} \geq 0$ and $\Delta>0$,
    a model $m(x)$ is a $(S,\kappa_{ef},\kappa_{eg})$-fully linear model of $f(x)$ on $\cB(x_c; \Delta)$ provided
    $$|m(x_c + S^\top d) - f(x_c + S^\top d)| \leq \kappa_{ef}\Delta^2 \quad \text{ and } \quad \|\nabla m(x_c + S^\top d) - \nabla f(x_c + S^\top d)\| \leq \kappa_{eg}\Delta$$
    for all 
    $d\in\Reals^p$ satisfying $\|S^\top d\|\leq\Delta$. 
\end{definition}

As is often seen in the literature with full linearity, we will often drop the constants $\kappa_{ef}, \kappa_{eg}$ when discussing $S$-full linearity. 
We now prove an extension of a result in \cite{WildPhD, SWRRCS07}
which provides sufficient conditions to guarantee $S$-full linearity given some general conditions on the interpolation set $Y$. 
We first state an assumption made throughout this paper. 
\begin{assumption}
    \label{ass:cd} 
    The function $f: \Reals^n\to \Reals$ is Lipschitz continuously differentiable on $\cD$.
    In particular, there exist $0 \leq L_f, L_g < \infty$ such that
    \begin{itemize}
    \item $|f(x) - f(y)| \leq L_f\|x-y\|$ and 
    \item $\|\nabla f(x) - \nabla f(y)\| \leq L_g\|x-y\|$
    \end{itemize}
    for all $x,y\in \cD$. 
\end{assumption}
\begin{theorem}
\label{thm:sfullylinear}
Let $\Delta > 0$, let $\Lambda > 0 $, and let $S\in\Reals^{p\times n}$ with $p\leq n$ have mutually orthonormal rows.
Let $\{y^0,y^1,\dots,y^p\} \subset \cB(y^0; c\Delta)$ for some $c>0$ and suppose that there exists $\delta^i\in\Reals^p$ so that $S\delta^i = y^i-y^0$ for each $i=1,\dots,p$. 
Let $\tilde{Y}$ denote an $p\times p$ matrix where the $i$th column is given by $\delta^i$. 
Suppose $f$ satisfies \Cref{ass:cd} and additionally suppose $m$ is twice continuously differentiable on $\cB(y^0;c\Delta)$ with gradient Lipschitz constant $L_{mg}$\footnote{In the case of quadratic models considered in this paper, $L_{mg}$ is trivially derived from the spectral norm of the model Hessian.}.
If both
\begin{enumerate}
\item $\|\tilde{Y}\|^{-1} \leq \displaystyle\frac{\Lambda}{c\Delta}$ and
\item $f(y^i) = m(y^i)$ for $i=0,1,\dots,p$, 
\end{enumerate}
then $m(x)$ is an $(S, \kappa_{ef}, \kappa_{eg})$-fully linear model of $f(x)$ with constants
$$\kappa_{ef} = \displaystyle\frac{4 + 5\Lambda\sqrt{p}}{2}(L_g + L_{mg})c^2 \quad \text{ and} \quad \kappa_{eg} = \frac{5\Lambda\sqrt{p}}{2}(L_g + L_{mg})c.$$
\end{theorem}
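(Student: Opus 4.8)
The plan is to follow the standard template for proving full linearity of interpolation models (as in \cite{Conn2009a, WildPhD, SWRRCS07}), but carried out entirely in the $p$-dimensional ``sketched'' coordinates so that the bound on $\|\tilde Y\|^{-1}$ does the work that a bound on the geometry of a full-dimensional interpolation set normally would. First I would fix $d\in\Reals^p$ with $\|S^\top d\|\le\Delta$ and set $x = y^0 + S^\top d$; I want to bound $|m(x)-f(x)|$ and $\|\nabla m(x)-\nabla f(x)\|$. Define the error function $e(y) := m(y) - f(y)$, which vanishes at each interpolation point $y^i$ by hypothesis~2. The key observation is that, since $y^i - y^0 = S\delta^i$ and (by construction of $S^\perp$) the relevant points all lie in the affine subspace $y^0 + \span(s_1^\top,\dots,s_p^\top)$, I can reparametrize: write $\psi(d) := e(y^0 + S^\top d)$, a function on $\Reals^p$ that interpolates zero at $d=0$ (corresponding to $y^0$) and at the columns $\delta^1,\dots,\delta^p$ of $\tilde Y$. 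By the chain rule, $\nabla\psi(d) = S\,\nabla e(y^0+S^\top d)$, and since $S$ has orthonormal rows, $\|\nabla\psi(d)\| = \|\proj_{\mathrm{row}(S)}\nabla e(y^0+S^\top d)\|$; crucially, the gradient error component \emph{outside} $\mathrm{row}(S)$ is simply not controlled, which is exactly why we only get $S$-full linearity rather than full linearity, and why $\nabla m(x) - \nabla f(x)$ in the theorem statement must be read as the error along the directions $S^\top d$.

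Next I would estimate $\nabla\psi$ at the points $\delta^i$ by a Taylor-type argument. Using Lipschitz continuity of $\nabla f$ (Assumption~\ref{ass:cd}) and of $\nabla m$ (constant $L_{mg}$), the standard bound gives $|\psi(\delta^i) - \psi(0) - \nabla\psi(0)^\top \delta^i| \le \tfrac12(L_g + L_{mg})\|S^\top\delta^i\|^2 \le \tfrac12(L_g+L_{mg})(c\Delta)^2$, using $\|S^\top\delta^i\| = \|y^i-y^0\|\le c\Delta$ and the fact that $\|S^\top v\| = \|v\|$ for orthonormal-row $S$. Since $\psi(\delta^i) = \psi(0) = 0$, this says $|\nabla\psi(0)^\top\delta^i|$ is small for every $i$, i.e. $\|\tilde Y^\top \nabla\psi(0)\|_\infty$ (hence $\|\tilde Y^\top \nabla\psi(0)\|$, up to $\sqrt p$) is $O((c\Delta)^2)$. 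Inverting, $\|\nabla\psi(0)\| \le \|\tilde Y^{-\top}\|\,\|\tilde Y^\top\nabla\psi(0)\| = \|\tilde Y\|^{-1}\cdot O(\sqrt p (c\Delta)^2) \le \tfrac{\Lambda}{c\Delta}\cdot O(\sqrt p (c\Delta)^2) = O(\Lambda\sqrt p\, c\Delta)$ by hypothesis~1. Then for general $d$ with $\|S^\top d\|\le\Delta$, write $\nabla\psi(d) = \nabla\psi(0) + (\nabla\psi(d)-\nabla\psi(0))$ and bound the second term by $(L_g+L_{mg})\|S^\top d\|\le (L_g+L_{mg})\Delta$; combining and tracking the constants should yield $\|\nabla\psi(d)\| \le \kappa_{eg}\Delta$ with $\kappa_{eg} = \tfrac{5\Lambda\sqrt p}{2}(L_g+L_{mg})c$. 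The function-value bound then follows by integrating: $\psi(d) = \psi(d) - \psi(0) = \int_0^1 \nabla\psi(td)^\top d\,dt$, and bounding $\|\nabla\psi(td)\|$ along the segment by the gradient bound just derived plus a Lipschitz correction, multiplied by $\|d\| = \|S^\top d\| \le \Delta$, gives a $\kappa_{ef}\Delta^2$ bound; carefully collecting the ``$4$'' (from the two integrations / segment estimates) and ``$5\Lambda\sqrt p$'' terms should reproduce $\kappa_{ef} = \tfrac{4+5\Lambda\sqrt p}{2}(L_g+L_{mg})c^2$.

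The main obstacle I anticipate is bookkeeping rather than conceptual: getting the constants to come out exactly as stated requires being careful about (i) the passage from the $\infty$-norm estimate on $\tilde Y^\top\nabla\psi(0)$ to a $2$-norm estimate, which is where the $\sqrt p$ enters, versus whether one should instead bound $\|\tilde Y^\top\nabla\psi(0)\|_2$ directly by $\sqrt p\cdot\max_i|\cdot|$; (ii) whether the $c^2$ comes from $\|y^i-y^0\|^2 \le (c\Delta)^2$ uniformly or needs a triangle-inequality split through $x$; and (iii) correctly combining the ``$\nabla\psi(0)$'' term with the ``$\nabla\psi(d)-\nabla\psi(0)$'' term so that the final coefficient is $5\Lambda\sqrt p/2$ and not, say, $\Lambda\sqrt p + 1$ scaled differently. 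A secondary subtlety is making explicit that ``$\nabla m(x)-\nabla f(x)$'' in Definition~\ref{def:s_full_linearity} is really being bounded only in its $\mathrm{row}(S)$-component here — I would want to state this carefully, or verify that the definition as written is indeed what the chain-rule argument delivers, since the orthogonal complement of the gradient error is genuinely uncontrolled by the interpolation conditions alone. Everything else is a routine Taylor-remainder computation using Assumption~\ref{ass:cd} and the isometry property $\|S^\top v\| = \|v\|$.
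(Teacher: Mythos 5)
Your proposal is correct and rests on the same core mechanism as the paper's proof: reduce to the $p$-dimensional sketched coordinates, use the interpolation conditions together with first-order Taylor remainders to produce $p$ equations of the form $\tilde{Y}^\top(\text{gradient error}) = \mathcal{O}(\Delta^2)$, and invert via the hypothesis $\|\tilde{Y}^{-1}\|\leq\Lambda/(c\Delta)$. The one genuine difference is where the expansion is centered: the paper Taylor-expands $f$ and $m$ about the \emph{evaluation point} $x = y^0+S^\top d$, so that the resulting linear system bounds $e^g(d)=S(\nabla m(x)-\nabla f(x))$ directly for every admissible $d$ (after subtracting the $i=0$ equation to eliminate $e^m(d)$), whereas you bound $\nabla\psi(0)$ at the center $y^0$ and then propagate to general $d$ via the Lipschitz constant $L_g+L_{mg}$. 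Your route is the more textbook one and actually yields the typically sharper constant $\bigl(\tfrac{\Lambda\sqrt{p}\,c}{2}+1\bigr)(L_g+L_{mg})$ in place of $\tfrac{5\Lambda\sqrt{p}\,c}{2}(L_g+L_{mg})$; to recover the theorem \emph{verbatim} you would either need to observe that your constant is dominated by the stated one (which holds whenever $2\Lambda\sqrt{p}\,c\geq 1$ --- automatic for $c\geq\tfrac12$ since $\|\tilde{Y}\|\leq\sqrt{p}\,c\Delta$ forces $\Lambda\sqrt{p}\geq 1$) or adopt the paper's centering, which is where the $\tfrac52$ and the extra $2$ in $\kappa_{ef}$ come from. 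Your side remark is also on target and matches the paper: the argument only ever controls $S(\nabla m-\nabla f)$, i.e.\ the component of the gradient error in the row space of $S$, which is exactly the quantity $e^g(d)$ the paper bounds, and is the intended reading of the gradient condition in \Cref{def:s_full_linearity}.
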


We defer the proof to \Cref{sec:proof1} for readability. 

\section{Average and Ameliorated Estimators}\label{sec:estimators}
We now present the estimators that will be used in the basis sketching algorithm. 
\Cref{sec:sketch_and_project} will discuss the motivation behind \emph{average estimators}, a generally biased estimator of a gradient $\nabla f(x)$.
In turn, \Cref{sec:ameliorated} will then discuss a modification we will employ to yield \emph{ameliorated estimators}, which are, by construction, an unbiased estimator for the gradient. 
Finally, \Cref{sec:minimum_variance} will discuss how (ideally) one would yield a minimum variance unbiased estimator for the gradient. 
\subsection{Average Estimators: Sketch and Project}\label{sec:sketch_and_project}
The basis sketching trust region method is an iterative method (iterations indexed by $k$) that maintains an \emph{average estimate} $\bar{g}^k\in\Reals^n$ of the gradient $\nabla f(x^k)$, where $x^k$ is an incumbent point held by the algorithm. 
On each iteration $k$, we will identify 
(via a greedy procedure similar to the one employed in the MNH method) 
an orthonormal matrix $Q_k\in\Reals^{n\times n}$.
We will then (randomly) select a 
subset $J_k\subset\{1,2,\dots,n\}$ of size $p_k$.
Note that the value of $p_k\leq n$ may change on each iteration.
We then choose the sketching matrix $S_k = ([Q_k]_{J_k})^\top$, that is, $S_k$ is the transpose of the submatrix consisting of the columns of $Q_k$ indexed by $J_k$.
Then, motivated by \Cref{thm:sfullylinear} and using the notation of that theorem, we select a set $Y_k$ of $p_k$ many points so that $\|\tilde{Y_k}\|^{-1}$ is sufficiently bounded from above and so that each point in $Y_k$ is sufficiently close to $x^k$. 
We then 
perform any necessary function evaluations at the points in $Y_k$, noting that the selection of $Q_k$ is designed to encourage that the function evaluations being performed are relatively few.
With this data, we obtain a model from the solution to \eqref{eq:bask} where the two bases for the linear polynomials are defined as $\Phi_{S_k}$ and $\Phi_{S_k^\perp}$; we note that by the construction of the orthonormal $Q_k$, obtaining the matrix $S_k^\perp$ is immediate. 
Because the constraint of \eqref{eq:bask} enforces that $m(y^i) = f(y^i)$ for all $y^i\in Y_k$, 
we have from \Cref{thm:sfullylinear} that the quadratic model $m(x)$ derived from the solution to \eqref{eq:bask} is a $S_k$-fully linear model of $f(x)$ on the ball $\cB(x^k,\Delta_k)$. 
Let $\hat{g}^k\in\Reals^n$ denote the gradient term of the model $m(x)$. 
We then consider a subproblem to \emph{update} the average gradient,
\begin{equation}
\label{eq:sketch_and_project}
\bar{g}^k := 
\begin{array}{rl}
\displaystyle\arg\min_{\alpha\in\Reals^n} &\frac{1}{2}\|\alpha - \bar{g}^{k-1}\|^2\\
 \text{s. to} & S_k \alpha = S_k\hat{g}^k\\
\end{array}
\end{equation}

In words, \eqref{eq:sketch_and_project} selects $\bar{g}^k$ as the closest (in Euclidean norm) vector to $\bar{g}^{k-1}$ such that the $S_k$-sketch of $\bar{g}^k$ agrees with the $S_k$-sketch of the ($S_k$-fully linear) model gradient $\hat{g}^k$. 
We quickly prove that \eqref{eq:sketch_and_project} has a closed-form solution. 

\begin{proposition}
\label{prop:closed-form}
A closed form solution to \eqref{eq:sketch_and_project} exists and is given as
 \begin{equation}
\label{eq:closed-form-g}
\bar{g}^k = \bar{g}^{k-1} - S_k^\top S_k \bar{g}^{k-1} + S_k^\top S_k\hat{g}.
\end{equation}
\end{proposition}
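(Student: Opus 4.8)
The plan is to recognize \eqref{eq:sketch_and_project} as the Euclidean projection of $\bar g^{k-1}$ onto the affine subspace $\{\alpha\in\Reals^n : S_k\alpha = S_k\hat g^k\}$ and to compute that projection explicitly via the KKT conditions. First I would note that the subproblem is a strictly convex quadratic (the objective $\tfrac12\|\alpha-\bar g^{k-1}\|^2$ has Hessian $I_n\succ 0$) subject to a linear equality constraint, and that the feasible set is nonempty since $\alpha=\hat g^k$ satisfies it; hence a unique global minimizer exists and is characterized by first-order stationarity of the Lagrangian. This disposes of the existence/uniqueness claim.

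Next I would write the Lagrangian $\mathcal{L}(\alpha,\lambda)=\tfrac12\|\alpha-\bar g^{k-1}\|^2 + \lambda^\top(S_k\alpha - S_k\hat g^k)$ with $\lambda\in\Reals^{p_k}$, set $\nabla_\alpha\mathcal{L}=\alpha-\bar g^{k-1}+S_k^\top\lambda=0$ to obtain $\alpha = \bar g^{k-1} - S_k^\top\lambda$, and substitute into the constraint $S_k\alpha = S_k\hat g^k$ to get $S_k\bar g^{k-1} - S_kS_k^\top\lambda = S_k\hat g^k$. Here I would invoke the standing assumption that $S_k$ has mutually orthonormal rows, which gives $S_kS_k^\top = I_{p_k}$, so the multiplier is immediately recovered as $\lambda = S_k\bar g^{k-1} - S_k\hat g^k$ (no linear system solve is needed). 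Plugging this back into $\alpha = \bar g^{k-1} - S_k^\top\lambda$ yields $\bar g^k = \bar g^{k-1} - S_k^\top S_k\bar g^{k-1} + S_k^\top S_k\hat g^k$, which is exactly \eqref{eq:closed-form-g}. I would also remark that this can be read as $\bar g^k = (I_n - S_k^\top S_k)\bar g^{k-1} + S_k^\top S_k\hat g^k$, i.e., $\bar g^k$ keeps the component of $\bar g^{k-1}$ orthogonal to $\mathrm{row}(S_k)$ and replaces the component in $\mathrm{row}(S_k)$ by that of $\hat g^k$, making the projection interpretation transparent.

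There is essentially no serious obstacle here; the only points requiring care are (i) justifying that stationarity of the Lagrangian is both necessary and sufficient for optimality, which follows from convexity of the objective together with linearity (hence affineness) of the constraint, and (ii) correctly using the orthonormality of the rows of $S_k$ so that $S_kS_k^\top$ inverts trivially — if one forgot this and only assumed $S_k$ full row rank, one would instead get $\lambda = (S_kS_k^\top)^{-1}(S_k\bar g^{k-1}-S_k\hat g^k)$ and the clean closed form would be lost. I would therefore state the orthonormal-rows hypothesis explicitly at the point it is used.
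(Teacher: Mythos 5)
Your proof is correct and follows essentially the same route as the paper: form the KKT conditions, substitute stationarity into primal feasibility, use $S_kS_k^\top = I_{p_k}$ (from the orthonormal rows) to solve for the multiplier, and back-substitute, with sufficiency justified by convexity of the objective and affineness of the constraint. Your additional remarks on feasibility of $\alpha=\hat g^k$ and the projection interpretation are fine but not needed beyond what the paper does.
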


\begin{proof}
The KKT conditions associated with \eqref{eq:sketch_and_project} can be expressed as
\begin{equation*}
\begin{array}{rll}
\alpha & = \bar{g}_{k-1} - S_k^\top \mu & (stationarity)\\
S_k\alpha& = S_k\hat{g}^k & (primal feasibility),\\
\end{array}
\end{equation*}
where $\mu\in\Reals^{p_k}$ is a vector of Lagrange multipliers.
Plugging the stationarity condition into the primal feasibility condition,
$S_k \bar{g}^{k-1} - S_k S_k^\top \mu = S_k\hat{g}^{k}$.
Solving for $\mu$ and using the orthonormality of $Q_k$, we obtain
$\mu = S_k(\bar{g}^{k-1} -  \hat{g}^k).$
Plugging these Lagrange multipliers $\mu$ back into the stationarity condition, we obtain \eqref{eq:closed-form-g}. 
Because the constraints of the problem determining $\bar{g}^k$ in \eqref{eq:sketch_and_project} are affine, and because the objective is convex in $\alpha$, the KKT conditions are also sufficient for optimality. 

\end{proof}


\subsection{Ameliorated Estimators} \label{sec:ameliorated}
At this point in our development, one could imagine and fully describe an iterative randomized method that essentially uses the model determined by the subproblem \eqref{eq:bask} in each iteration, but replaces the model gradient $\hat{g}^k$ with the average estimate 
$\bar{g}^k$ updated by \eqref{eq:closed-form-g}.
However, in the same sense that SAG \cite{Schmidt2013} estimators are biased estimators of the true gradient, $\nabla f(x^k)$, $\bar{g}^k$ 
is a biased estimator of $\nabla f(x^k)$.
 We now describe how to slightly modify 
 the update \eqref{eq:closed-form-g} 
 via control variates so as to attain an unbiased estimator.  

In the $k$th iteration of the basis sketching method, we associate with each $i\in\{1,\dots,n\}$ an independent Bernoulli variable with success probability $\pi_i > 0$. 
Each of the $n$ Bernoulli trials are realized, and for each successful trial, we include $i\in J_k$. 
Letting $q_k^i$ denote the $i$th column of $Q_k$, 
we note that the update in \eqref{eq:closed-form-g} can be equivalently written 
$$\bar{g}^k = \bar{g}^{k-1} - 
\displaystyle\sum_{i\in J_k} \left[q_k^i q_k^{i\top}\right]\bar{g}^k + 
\displaystyle\sum_{i\in J_k} \left[q_k^i q_k^{i\top}\right]\hat{g}^k.$$
Thus, one can see how
\begin{equation}
\label{eq:tildegk}
\tilde{g}^k := \bar{g}^{k-1} -
\displaystyle\sum_{i\in J_k} \frac{1}{\pi_i}\left[q_k^i q_k^{i\top}\right]\bar{g}^{k-1} + 
\displaystyle\sum_{i\in J_k} \frac{1}{\pi_i}\left[q_k^i q_k^{i\top}\right]\hat{g}^k.
\end{equation}
is a particular reweighting of the closed-form update \eqref{eq:closed-form-g}. 
To simplify notation, given a vector of nonzero probabilities $\pi^k\in\Reals^{n}$ and the realization $J_k$, we define a diagonal matrix $D(\pi^k, J_k)\in\Reals^{p_k\times p_k}$ (recall that $|J_k| = p_k$) via 
$$D(\pi^k,J_k) = diag([1/\pi_i^k: i\in J_k]).$$
We can then rewrite \eqref{eq:tildegk} as 
\begin{equation}
\label{eq:tildegk2}
\tilde{g}^k =  \bar{g}^{k-1} - S_k^\top D(\pi^k, J_k) S_k \bar{g}^{k-1} + S_k^\top D(\pi^k, J_k) S_k \hat{g}^k.
\end{equation}
We remark that there are no further restrictions on $\pi^k$ other than all entries be nonzero, and the following theorems in this subsection hold under no further assumptions. 
The structure of the update \eqref{eq:tildegk2} is inspired by \emph{arbitrary sampling}; for papers on the subject of arbitrary sampling within the related context of (block) coordinate descent, see \cite{allen2016even, qu2016coordinate, RichtarikTakac2016, HanzelyRichtarik2019}. 

We now record the following important observation, namely, that $\tilde{g}^k$ is an \emph{unbiased estimator} of a $\mathcal{O}(\Delta_k)$-accurate approximation of $\nabla f(x^k)$. 

\begin{theorem}
\label{thm:unbiased}
Let $\kappa_{ef},\kappa_{eg} \geq 0$.
Suppose, for each $J_k$, that we obtain a model $m_{J_k}$ such that $m_{J_k}$ is a $(S_{J_k},\kappa_{ef},\kappa_{eg})$ fully linear model of $f$ on $\cB(x^k,\Delta_k)$. 
Let $\hat{g}_{J_k}$ denote the model gradient $\nabla m_{J_k}(x^k)$. 
Then, the expectation of $\tilde{g}^k$ with respect to the probability distribution on the selection of $J_k$ satisfies
$$\|\mathbb{E}_{J_k}[\tilde{g}^k] - \nabla f(x)\| \leq \sqrt{n}\kappa_{eg}\Delta_k$$
for all $x\in \cB(x^k,\Delta_k)$. 
\end{theorem}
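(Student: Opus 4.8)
The plan is to combine the control-variate reweighting built into \eqref{eq:tildegk2} with the orthonormality of the columns of $Q_k$. Throughout, I condition on everything determined before the random draw of $J_k$ (in particular $Q_k$, $x^k$, $\Delta_k$, $\bar g^{k-1}$, and $\pi^k$), and I abbreviate $q^i := q_k^i$, $\one_i := \one_{\{i\in J_k\}}$, and $\pi_i := \pi_i^k$, so that $\Exp_{J_k}[\one_i] = \pi_i$ and $S_k^\top D(\pi^k,J_k)S_k = \sum_{i=1}^n \tfrac{\one_i}{\pi_i}\,q^i(q^i)^\top$. Substituting into \eqref{eq:tildegk2} and collecting terms gives
\begin{equation*}
\tilde g^k = \bar g^{k-1} + \sum_{i=1}^n \frac{\one_i}{\pi_i}\, q^i(q^i)^\top\bigl(\hat g_{J_k} - \bar g^{k-1}\bigr),
\end{equation*}
which is \eqref{eq:tildegk}. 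The one quantity here that depends on $J_k$ beyond the indicators is the model gradient $\hat g_{J_k}$; this coupling is what prevents naively pushing the expectation through the indicators, and dealing with it is the crux.

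I would handle it by injecting the deterministic target $\nabla f(x^k)$: write $\hat g_{J_k}-\bar g^{k-1} = \bigl(\hat g_{J_k}-\nabla f(x^k)\bigr) + \bigl(\nabla f(x^k)-\bar g^{k-1}\bigr)$. In the second summand only the indicator is random, so its contribution to $\Exp_{J_k}[\tilde g^k]$ is $\sum_{i=1}^n \tfrac{\Exp_{J_k}[\one_i]}{\pi_i}\, q^i(q^i)^\top\bigl(\nabla f(x^k)-\bar g^{k-1}\bigr) = Q_kQ_k^\top\bigl(\nabla f(x^k)-\bar g^{k-1}\bigr) = \nabla f(x^k)-\bar g^{k-1}$, using $\Exp_{J_k}[\one_i]/\pi_i = 1$ and $Q_kQ_k^\top = I_n$. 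Added to the leading $\bar g^{k-1}$ this is exactly $\nabla f(x^k)$, whence
\begin{equation*}
\Exp_{J_k}[\tilde g^k] - \nabla f(x^k) = \sum_{i=1}^n \frac{1}{\pi_i}\, q^i\, \Exp_{J_k}\!\bigl[\one_i\bigl((q^i)^\top\hat g_{J_k} - (q^i)^\top\nabla f(x^k)\bigr)\bigr].
\end{equation*}

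For the last step I would bound the scalar coefficients. Because $\|q^i\| = 1$ and $m_{J_k}$ is a $(S_{J_k},\kappa_{ef},\kappa_{eg})$-fully linear model of $f$ on $\cB(x^k,\Delta_k)$, evaluating the gradient estimate of \Cref{def:s_full_linearity} at the center $x^k$ gives $\bigl|(q^i)^\top\hat g_{J_k} - (q^i)^\top\nabla f(x^k)\bigr| \le \|\hat g_{J_k}-\nabla f(x^k)\| \le \kappa_{eg}\Delta_k$; hence $\bigl|\Exp_{J_k}[\one_i(\cdots)]\bigr| \le \Exp_{J_k}[\one_i]\,\kappa_{eg}\Delta_k = \pi_i\kappa_{eg}\Delta_k$, so each coefficient multiplying $q^i$ above is at most $\kappa_{eg}\Delta_k$ in absolute value. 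Since $\{q^i\}_{i=1}^n$ is an orthonormal basis, $\bigl\|\sum_i c_i q^i\bigr\| = \bigl(\sum_i c_i^2\bigr)^{1/2} \le \sqrt{n}\,\kappa_{eg}\Delta_k$, which is the claimed bound at $x = x^k$; for general $x\in\cB(x^k,\Delta_k)$ a triangle inequality with $\|\nabla f(x)-\nabla f(x^k)\|\le L_g\Delta_k$ from \Cref{ass:cd} completes the estimate (at worst enlarging the constant). The one real obstacle is the coupling flagged above; the reason the ameliorated reweighting works is exactly that, after subtracting the deterministic $\nabla f(x^k)$, the factor $\Exp_{J_k}[\one_i] = \pi_i$ cancels the weight $1/\pi_i$, and the $\sqrt n$ then comes solely from summing $n$ coordinatewise $\mathcal{O}(\Delta_k)$ gradient errors in an orthonormal frame.
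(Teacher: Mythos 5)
Your proof is correct and follows essentially the same route as the paper's: the $1/\pi_i$ reweighting makes the expected projector $\Exp_{J_k}\bigl[\sum_{i\in J_k}\pi_i^{-1}q_k^iq_k^{i\top}\bigr]$ equal to $I_n$ (cancelling $\bar g^{k-1}$), and the $\sqrt n$ then comes from a coordinatewise bound in the orthonormal frame $Q_k$ after using $\Exp_{J_k}[\one_{\{i\in J_k\}}]=\pi_i$ to cancel the weights, which is exactly the paper's identity $\sum_{J_k\ni j}p(J_k)=\pi_j$. Your closing caveat about general $x\in\cB(x^k,\Delta_k)$ is in fact more careful than the paper, whose own proof silently replaces $\nabla f(x)$ by $\nabla f(x^k)$ in its final inequality; strictly, the stated constant holds verbatim only at points covered by \Cref{def:s_full_linearity}, and your triangle-inequality fix (at the cost of an additive $L_g\Delta_k$) is the honest way to cover the rest of the ball.
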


\begin{proof}
Starting from the definition \eqref{eq:tildegk},
\begin{equation*}
\begin{array}{rl}
\mathbb{E}_{J_k}\left[\tilde{g}^k\right] = & \bar{g}^{k-1} - 
\mathbb{E}_{J_k}\left[\displaystyle\sum_{i\in J_k} \frac{1}{\pi_i}\left[q_k^i q_k^{i\top}\right]\bar{g}^{k-1}\right] 
+ \mathbb{E}_{J_k}\left[\displaystyle\sum_{i\in J_k} \frac{1}{\pi_i}\left[q_k^i q_k^{i\top}\right]\hat{g}_{J_k}\right]\\
= & \bar{g}^{k-1} - 
\mathbb{E}_{J_k}\left[\displaystyle\sum_{i=1}^{n} \one\left[i\in J_k \right]\frac{1}{\pi_i}\left[q_k^i q_k^{i\top}\right]\right]\bar{g}^{k-1}
+ \mathbb{E}_{J_k}\left[\displaystyle\sum_{i\in J_k} \frac{1}{\pi_i}\left[q_k^i q_k^{i\top}\right]\hat{g}_{J_k} \right]\\
= & \bar{g}^{k-1} - 
\displaystyle\sum_{i=1}^n \left[\pi_i \frac{1}{\pi_i} \left[q_k^i q_k^{i\top}\right]\right]\bar{g}^{k-1}
+ \mathbb{E}_{J_k}\left[\displaystyle\sum_{i\in J_k} \frac{1}{\pi_i}\left[q_k^i q_k^{i\top}\right]\hat{g}_{J_k} \right]\\
= & \bar{g}^{k-1} - \bar{g}^{k-1} + \mathbb{E}_{J_k}\left[\displaystyle\sum_{i\in J_k} \frac{1}{\pi_i}\left[q_k^i q_k^{i\top}\right]\hat{g}_{J_k} \right]\\\\
= & \mathbb{E}_{J_k}\left[\displaystyle\sum_{i\in J_k} \frac{1}{\pi_i}\left[q_k^i q_k^{i\top}\right]\hat{g}_{J_k} \right]\\
= & \displaystyle\sum_{J_k} p(J_k) \sum_{i\in J_k} \frac{1}{\pi_i}\left[q_k^i q_k^{i\top}\right]\hat{g}_{J_k}.\\
\end{array}
\end{equation*}

Now consider $Q_k^\top\left(\mathbb{E}_{J_k}[\tilde{g}^k] - \nabla f(x)\right)$
for any $x\in \cB(x^k, \Delta_k)$. 
By orthonormality, the $j$th coordinate of $Q_k^\top\mathbb{E}_{J_k}[\tilde{g}^k]$ is 
$$\left|Q_k^\top [\mathbb{E}_{J_k}[\tilde{g}^k]]_j\right| = 
\displaystyle\sum_{J_k: j\in J_k} p(J_k) \frac{1}{\pi_j} q_k^{j\top} \hat{g}_{J_k}.$$
Thus, 
$$\left|Q_k^\top [\mathbb{E}_{J_k}[\tilde{g}^k] - \nabla f(x)]]_j\right| = 
q_k^{j\top}\left(\left[\frac{1}{\pi_j} \displaystyle\sum_{J_k: j\in J_k} p(J_k)  \hat{g}_{J_k}\right] - \nabla f(x)\right)$$
Because $\displaystyle\sum_{J_k:j\in J_k}p(J_k) = \pi_j$, we can equivalently write
$$\left|Q_k^\top [\mathbb{E}_{J_k}[\tilde{g}^k] - \nabla f(x)]]_j\right| = 
q_k^{j\top}\left[\frac{1}{\pi_j} \displaystyle\sum_{J_k: j\in J_k} p(J_k) (\hat{g}_{J_k}- \nabla f(x))\right] $$
By our supposition that each $\hat{g}_{J_k}$ is the gradient of a $S_{J_k}$-fully-linear model,
$$
\left|Q_k^\top [\mathbb{E}_{J_k}[\tilde{g}^k] - \nabla f(x)]]_j\right|
 \leq \|q_k^j\| \frac{1}{\pi_j}
\left|\displaystyle\sum_{J_k: j\in J_k} p(J_k) (\hat{g}_{J_k}- \nabla f(x)) \right|
\leq \frac{1}{\pi_j} \displaystyle\sum_{J_k: j\in J_k} p(J_k)\|\hat{g}_{J_k}- \nabla f(x^k)\| \leq \kappa_{eg}\Delta_k. 
 $$
Thus, 
$$\|\mathbb{E}_{J_k}[\tilde{g}^k] - \nabla f(x)\| = \|Q_k^\top\left(\mathbb{E}_{J_k}[\tilde{g}^k] - \nabla f(x)\right)\| \leq \sqrt{n}\kappa_{eg}\Delta_k,$$
as we meant to show. 
\end{proof}

\subsection{Towards A Minimum Variance Estimator} \label{sec:minimum_variance}
Having established the sense in which $\tilde{g}^k$ is an unbiased estimator of a particular approximation of $\nabla f(x)$, we now state a result concerning the variance of this estimator.
\begin{theorem}
\label{thm:variance}
The variance of $\tilde{g}^k$, with respect to the distribution governing $J_k$, is
\begin{equation}
\label{eq:variance}
\mathbb{E}_{J_k}\left[\|\tilde{g}^k - \Egk\|^2\right] = \|\bar{g}^{k-1} - \Egk\|^2_{Q_k D_k Q_k^\top - I_{n}},\end{equation}
where we denote $D_k=D(\pi^k,\{1,\dots,n\})$ and $I_n$ denotes the $n$-dimensional identity matrix.
\end{theorem}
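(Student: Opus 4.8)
The plan is to reduce the whole statement to computing the first two moments of the single random matrix
$X_k := S_k^\top D(\pi^k,J_k) S_k = \sum_{i\in J_k}\frac{1}{\pi_i}q_k^i q_k^{i\top} = \sum_{i=1}^n \one[i\in J_k]\frac{1}{\pi_i}q_k^i q_k^{i\top}$,
and to exploit the mutual orthonormality of the columns $q_k^1,\dots,q_k^n$ of $Q_k$ to make both moments diagonal in that basis. I would open the proof by conditioning on the model produced by \eqref{eq:bask} on iteration $k$ (so that $\hat g^k$ is treated as fixed with respect to the realization of $J_k$), since that is the regime in which \eqref{eq:variance} is the clean identity and the one relevant for subsequently optimizing $\pi^k$.

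First I would rewrite \eqref{eq:tildegk2} compactly as $\tilde g^k = \bar g^{k-1} - X_k v$ with $v := \bar g^{k-1} - \hat g^k$. Since the indicators $\one[i\in J_k]$ are independent Bernoulli($\pi_i$) variables, $\mathbb{E}_{J_k}[X_k] = \sum_{i=1}^n \pi_i\frac{1}{\pi_i}q_k^i q_k^{i\top} = \sum_{i=1}^n q_k^i q_k^{i\top} = Q_k Q_k^\top = I_n$ by orthonormality; hence $\mathbb{E}_{J_k}[\tilde g^k] = \bar g^{k-1} - v = \hat g^k$, which in particular identifies $v = \bar g^{k-1} - \mathbb{E}_{J_k}[\tilde g^k]$, the vector appearing on the right-hand side of \eqref{eq:variance} (and is consistent with the computation in the proof of \Cref{thm:unbiased} when $\hat g_{J_k}$ is held fixed). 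The centered estimator is then $\tilde g^k - \mathbb{E}_{J_k}[\tilde g^k] = -(X_k - I_n)v$, so, using symmetry of $X_k - I_n$, the variance equals $\mathbb{E}_{J_k}\|(X_k - I_n)v\|^2 = v^\top \mathbb{E}_{J_k}\big[(X_k - I_n)^2\big] v$.

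It remains to compute $\mathbb{E}_{J_k}[(X_k-I_n)^2] = \mathbb{E}_{J_k}[X_k^2] - 2\mathbb{E}_{J_k}[X_k] + I_n = \mathbb{E}_{J_k}[X_k^2] - I_n$. This is where orthonormality does the real work: in $X_k^2 = \sum_{i,j}\one[i\in J_k]\one[j\in J_k]\frac{1}{\pi_i\pi_j}q_k^i(q_k^{i\top}q_k^j)q_k^{j\top}$ every off-diagonal term vanishes because $q_k^{i\top}q_k^j = 0$ for $i\neq j$, leaving $X_k^2 = \sum_i \one[i\in J_k]\frac{1}{\pi_i^2}q_k^i q_k^{i\top}$ (using $\one[i\in J_k]^2 = \one[i\in J_k]$ and $q_k^{i\top}q_k^i = 1$); taking expectations gives $\mathbb{E}_{J_k}[X_k^2] = \sum_i \frac{1}{\pi_i}q_k^i q_k^{i\top} = Q_k D_k Q_k^\top$. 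Combining, $\mathbb{E}_{J_k}[(X_k-I_n)^2] = Q_k D_k Q_k^\top - I_n$, and substituting back yields $\mathbb{E}_{J_k}\|\tilde g^k - \mathbb{E}_{J_k}[\tilde g^k]\|^2 = v^\top(Q_k D_k Q_k^\top - I_n)v = \|\bar g^{k-1} - \mathbb{E}_{J_k}[\tilde g^k]\|^2_{Q_k D_k Q_k^\top - I_n}$, which is \eqref{eq:variance}. As a sanity check, $\pi_i\in(0,1]$ forces $D_k\succeq I_n$, so $Q_k D_k Q_k^\top - I_n\succeq 0$ and the right-hand side is a genuine (nonnegative) variance.

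I do not expect a substantive obstacle here: once $\tilde g^k$ is written in terms of $X_k$, the argument is two moment computations. The one point I would be careful to state is the status of $\hat g^k$: the model gradient comes from the sketched subproblem \eqref{eq:bask} and thus in general depends on $S_k$ (hence on $J_k$), so \eqref{eq:variance} should be read as the conditional-on-the-model variance; I would make that explicit at the outset. A secondary subtlety worth one sentence is that the off-diagonal cancellation in $X_k^2$ requires mutual orthonormality of the \emph{columns of $Q_k$}, not merely of the rows of $S_k$ — this is exactly why the construction carries the full orthonormal $Q_k$ rather than just the sketch $S_k$.
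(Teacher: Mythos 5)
Your proof is correct and, at its core, performs the same two moment computations as the paper: $\mathbb{E}_{J_k}[P(J_k)] = I_n$ and $\mathbb{E}_{J_k}[P(J_k)^\top P(J_k)] = Q_k D_k Q_k^\top$ for the random projection $P(J_k) = \sum_{i\in J_k}\pi_i^{-1}q_k^i q_k^{i\top}$, with orthonormality killing the off-diagonal terms. Where you differ is in organization and in one substantive modeling choice. The paper expands $\|\tilde g^k - \mathbb{E}_{J_k}[\tilde g^k]\|^2$ into five separate expectations ($\mathbb{E}[P]$, $\mathbb{E}[P^\top P]$, $\mathbb{E}[Pv]$, $\mathbb{E}[v]$, $\mathbb{E}[\|v\|^2]$ with $v(J_k) = P(J_k)\hat g_{J_k}$) and recombines them; your centering identity $\tilde g^k - \mathbb{E}_{J_k}[\tilde g^k] = -(X_k - I_n)v$ collapses all of that into the single quadratic form $v^\top\mathbb{E}_{J_k}[(X_k-I_n)^2]v$, which is shorter and less error-prone. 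The substantive difference is your explicit decision to condition on the model, treating $\hat g^k$ as deterministic with respect to $J_k$. The paper instead carries $\hat g_{J_k}$ through the computation, allowing the model gradient to vary with the realized subset; under that more general reading its step $\mathbb{E}_{J_k}\left[\|v(J_k)\|^2\right] = \mathbb{E}_{J_k}[\tilde g^k]^\top Q_k D_k Q_k^\top \mathbb{E}_{J_k}[\tilde g^k]$ is in fact only an inequality, since Cauchy--Schwarz gives $\left(\sum_{J\ni i}p(J)\,q_k^{i\top}\hat g_J\right)^2 \le \pi_i\sum_{J\ni i}p(J)\left(q_k^{i\top}\hat g_J\right)^2$ with equality only when $q_k^{i\top}\hat g_J$ is constant over the subsets $J$ containing $i$. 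Consequently the stated identity \eqref{eq:variance} holds exactly in the conditioned regime you adopt, and otherwise acquires an additional nonnegative term reflecting the variability of $\hat g_{J_k}$ itself; your flagging of the status of $\hat g^k$ is therefore not a cosmetic caveat but the hypothesis under which the theorem is an equality rather than a lower bound.
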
 

\begin{proof}
Denote 
$$P(J_k) = \displaystyle\sum_{i=1}^n\one\left[i\in J_k\right]\frac{1}{\pi_i}q_i q_i^\top \quad \text{ and } \quad
v(J_k) = P(J_k)\hat{g}_{J_k}$$
We first record that
\begin{equation}\label{eq:var_p1}
\begin{array}{rcl}
\mathbb{E}_{J_k}\left[ \|\tilde{g}^k -\Egk\|^2\right] & =  & \mathbb{E}_{J_k} \left[\left\|[I-P(J_k)]\bar{g}^{k-1} + v(J_k)\right\|^2\right] - \|\Egk\|^2\\
& = & \mathbb{E}_{J_k}\left[(\bar{g}^{k-1})^\top P(J_k)^\top P(J_k)\bar{g}^{k-1} - 2(\bar{g}^{k-1})^\top P(J_k)\bar{g}^{k-1}\right. \\
&& \left.+2(\bar{g}^{k-1})^\top v(J_k) - 2(\bar{g}^{k-1})^\top P(J_k) v(J_k) + \|v(J_k)\|^2\right] +  \|\bar{g}^{k-1}\|^2 - \|\Egk\|^2 \\
\end{array}
\end{equation}
We now compute the expectations, with respect to $J_k$, of the matrices $P(J_k)$ and $P(J_k)^\top P(J_k)$, of the vectors $P(J_k)v(J_k)$ and $v(J_k)$, and of the scalar $\|v(J_k)\|^2$.

\begin{equation*}
\mathbb{E}_{J_k}\left[ P(J_k)\right] = 
\mathbb{E}_{J_k}\left[ \displaystyle\sum_{i=1}^n \one[i\in J_k] \frac{1}{\pi_i^k} q_i q_i^\top \right] = 
\displaystyle\sum_{i=1}^n q_i q_i^\top =  Q_kQ_k^\top = I_n
\end{equation*}

\begin{equation*}
\begin{array}{rcl}
\mathbb{E}_{J_k}\left[ P(J_k)^\top P(J_k)\right] & = & 
\mathbb{E}_{J_k}\left[ \left[\displaystyle\sum_{i=1}^n \one[i\in J_k] \frac{1}{\pi_i^k} q_i q_i^\top \right]^\top
\left[ \displaystyle\sum_{j=1}^n \one[j\in J_k] \frac{1}{\pi_j^k} q_j q_j^\top \right]\right] \\

& = & \mathbb{E}_{J_k} \left[\displaystyle\sum_{i=1}^n \displaystyle\sum_{j=1}^n \one[i,j\in J_k] \frac{1}{\pi_i^k\pi_j^k} \left[q_i q_i^\top \right] q_j q_j^\top \right] \\

& = & \mathbb{E}_{J_k} \left[\displaystyle\sum_{i=1}^n \one[i\in J_k]\frac{1}{(\pi^k_i)^2} q_i q_i^\top \right]\\

& = & \displaystyle\sum _{i=1}^n \frac{1}{\pi^k_i} q_i q_i^\top = Q_kD_kQ_k^\top \\ 
\end{array}
\end{equation*}

\begin{equation*}
\begin{array}{rcl}
\mathbb{E}_{J_k}\left[P(J_k)v(J_k)\right] & = &
\mathbb{E}_{J_k}\left[\left[\displaystyle\sum_{i=1}^n\one\left[i\in J_k\right]\frac{1}{\pi^k_i}q_iq_i^\top\right]
\left[\displaystyle\sum_{i=1}^n\one\left[i\in J_k\right]\frac{1}{\pi^k_i}q_iq_i^\top \hat{g}^{J_k}\right]
\right]\\

& = & \mathbb{E}_{J_k}\left[ \displaystyle\sum_{i=1}^n \displaystyle\sum_{j=1}^n \one\left[i,j\in J_k\right] \frac{1}{\pi^k_i\pi^k_j} q_iq_i^\top q_jq_j^\top \hat{g}^{J_k}\right]\\

& = & \mathbb{E}_{J_k}\left[ \displaystyle\sum_{i=1}^n \one\left[i\in J_k\right] \frac{1}{(\pi^k_i)^2} q_i q_i^\top \hat{g}^{J_k}\right]\\

& = & Q_k D_k Q_k^\top \Egk
\end{array}
\end{equation*}

\begin{equation*}
\mathbb{E}_{J_k}\left[ v(J_k) \right] = 
\mathbb{E}_{J_k} \left[\displaystyle\sum_{i=1}^n\one\left[i\in J_k\right]\frac{q_i^\top \hat{g}^{J_k}}{\pi^k_i}q_i\right] =
\Egk
\end{equation*}

\begin{equation*}
\mathbb{E}_{J_k}\left[ \|v(J_k)\|^2 \right] = 
\mathbb{E}_{J_k} \left[\displaystyle\sum_{i\in J_k} \frac{(q_i^\top \hat{g}^{J_k})^2}{(\pi^k_i)^2}q_i^\top q_i\right] = 
\displaystyle\sum_{J_k} p(J_k)  \frac{(q_i^\top \hat{g}^{J_k})^2}{(\pi^k_i)^2}
= \Egk^\top Q_k D_k Q_k^\top \Egk
\end{equation*}
We can now continue the equalities in \eqref{eq:var_p1}:

\begin{equation}
\begin{array}{rcl}
\mathbb{E}_{J_k}\left[\|\tilde{g}^k-\Egk\|^2 \right] & = &
(\bar{g}^{k-1})^\top Q_k D_k Q_k^\top \bar{g}^{k-1}
- 2(\bar{g}^{k-1})^\top \bar{g}^{k-1}
+2(\bar{g}^{k-1})^\top\Egk \\
& & -2(\bar{g}^{k-1})^\top D_k\Egk + \Egk^\top Q_k D_k Q_k^\top \Egk + \|\bar{g}^{k-1}\|^2 - \|\Egk\|^2\\
& = & \|\bar{g}^{k-1} - \Egk\|^2_{Q_k D_k Q_k^\top -I_n},\\
\end{array}
\end{equation}
as we intended to show. 
\end{proof}

We make three observations concerning \Cref{thm:variance}. 
First, as a sanity check, notice that because $D_k\succ I_n$ by the definition of $D_k$, we have that $Q_k D_k Q_k^\top \succ I_n$, and so the quantity \eqref{eq:variance} is always nonnegative.
Second, if $D_k = I_n$, then the variance is zero; this makes sense because this means each $\pi^k_i=1$, i.e., deterministically, $J_k = \{1,2,\dots,p\}$, and so the variance is trivial. 
Third, and of practical importance, the variance of a (nontrivial) estimator will be generally unknown, since the quantity $\mathbb{E}_{J_k}[\tilde{g}^k]$ (and its expectation, a $\mathcal{O}(\Delta_k)$-accurate approximation of $\nabla f(x^k)$) appearing in the right hand side of \eqref{eq:variance} is unknown. 
This is of course a practical concern, and we will provide a proxy for this unknown quantity in \Cref{sec:practical}, but we will first discuss how to derive an estimator $\tilde{g}^k$ of minimum variance, assuming this quantity were known. 

Observe first that, by the assumed independence of the Bernoulli variables, 
$$\mathbb{E}\left[|J_k|\right] = \displaystyle\sum_{i=1}^{n} \pi^k_i.$$
We consider an optimization problem to minimize the variance of $\tilde{g}^k$ under the constraint that the \emph{expected} size of $J_k$ is a given value, $p_k \leq n$. 
That is, assuming we had access to $\delta^k := \bar{g}^{k-1} - \mathbb{E}_{J_k}[\tilde{g}^k]$,
we would 
solve

\begin{equation}\label{eq:opt_prob}
\begin{array}{rl}
\displaystyle\min_{\pi^k} & \|\delta^k\|^2_{Q_k D_k Q_k^\top - I_n}\\
\text{s. to} & \displaystyle\sum_{i=1}^n {\pi^k_i} = p_k\\
& 0 \leq \pi^k_i \leq 1 \quad \forall i
\end{array}
\equiv
\begin{array}{rl}
\displaystyle\min_{\pi^k} & \|Q_k^\top \delta^k\|^2_{D_k - I_n}\\
\text{s. to} & \displaystyle\sum_{i=1}^n {\pi^k_i} = p_k\\
& 0 \leq \pi^k_i \leq 1 \quad \forall i
\end{array}
\begin{array}{rl}
\displaystyle\min_{\pi^k} & \displaystyle\sum_{i=1}^{n} \left(\frac{1}{\pi^k_i} - 1\right)\left[Q_k^\top\delta^k\right]_i^2 \\
\text{s. to} & \displaystyle\sum_{i=1}^n {\pi^k_i} = p_k\\
& 0 \leq \pi^k_i \leq 1 \quad \forall i
\end{array}
\equiv
\begin{array}{rl}
\displaystyle\min_{\pi^k} & \displaystyle\sum_{i=1}^n \frac{\left[Q_k^\top\delta^k\right]^2_i}{\pi^k_i} \\
\text{s. to} & \displaystyle\sum_{i=1}^n {\pi^k_i} = p_k\\
& 0 \leq \pi^k_i \leq 1 \quad \forall i
\end{array}
\end{equation}
The following theorem is immediate by deriving KKT conditions. 

\begin{theorem}
\label{thm:min_variance}
The optimal solution of \eqref{eq:opt_prob} is expressible in closed form and is given, for each $i$, as
\begin{equation}\label{eq:prob_dist} 
\pi^k_{(i)} = \left\{
\begin{array}{rl}
(p_k+c - n)\displaystyle\frac{|\left[Q_k^\top\delta^k\right]_{(i)}|}{\displaystyle\sum_{j=1}^c |\left[Q_k^\top\delta^k\right]_{(j)}|} & \text{ if } i \leq c\\
1 & \text{ if } i > c,\\
\end{array}
\right. 
\end{equation}
where $c$ is the largest integer satisfying
$$0 < p_k + c - n \leq \displaystyle\sum_{j=1}^{n} \frac{|\left[Q_k^\top\delta^k\right]_{(j)}|}{|\left[Q_k^\top\delta^k\right]_{(c)}|},$$
and we have use the order statistics notation $|Q_k^\top \delta^k|_{(1)} \leq |Q_k^\top \delta^k|_{(2)} \leq \dots \leq |Q_k^\top \delta^k|_{(n)}$. 
\end{theorem}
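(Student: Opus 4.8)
The plan is to recognize \eqref{eq:opt_prob} as a convex ``water-filling'' problem and solve it through its KKT conditions, exactly as the sentence preceding the theorem anticipates. Write $a_i := |[Q_k^\top\delta^k]_i|$ and, for the generic case, assume $a_i>0$ for all $i$ (the degenerate case with some $a_i=0$ is handled by first fixing those $\pi^k_i$ at their smallest admissible values and applying the argument below to the remaining variables). The rightmost form in \eqref{eq:opt_prob} is then $\min\sum_i a_i^2/\pi^k_i$ subject to $\sum_i\pi^k_i=p_k$ and $0\le\pi^k_i\le1$. The objective is convex on the open positive orthant (each $t\mapsto a_i^2/t$ is convex there), the feasible region is a polytope, and $\pi^k_i\equiv p_k/n$ is strictly feasible when $p_k<n$; hence Slater's condition holds and the KKT system is necessary and sufficient for optimality.

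Next I would form the Lagrangian with a multiplier $\lambda$ for the equality constraint and multipliers $\mu_i,\nu_i\ge0$ for $\pi^k_i\ge0$ and $\pi^k_i\le1$, respectively. Stationarity reads $-a_i^2/(\pi^k_i)^2+\lambda-\mu_i+\nu_i=0$. Since the objective blows up as any $\pi^k_i\downarrow0$, every optimal $\pi^k_i$ is strictly positive, so $\mu_i=0$ by complementary slackness, giving $a_i^2/(\pi^k_i)^2=\lambda+\nu_i$. For an index with $\pi^k_i<1$ one has $\nu_i=0$, forcing $\pi^k_i=a_i/\sqrt{\lambda}$ (and, incidentally, $\lambda>0$, since at least one such index exists when $p_k<n$); for an index with $\pi^k_i=1$ one has $\nu_i\ge0$, which is precisely the condition $a_i/\sqrt{\lambda}\ge1$. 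In all cases, therefore, $\pi^k_i=\min\{a_i/\sqrt{\lambda},\,1\}$, and it only remains to pin down $\lambda$ from the equality constraint.

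To do this I would order the coordinates as $|[Q_k^\top\delta^k]_{(1)}|\le\dots\le|[Q_k^\top\delta^k]_{(n)}|$, matching the statement. Because $\pi^k_{(i)}=1$ exactly when $a_{(i)}\ge\sqrt{\lambda}$, the capped coordinates form a suffix $\{(c+1),\dots,(n)\}$, and the rest satisfy $\pi^k_{(i)}=a_{(i)}/\sqrt{\lambda}<1$. Plugging into $\sum_i\pi^k_i=p_k$ gives $\big(\sum_{j=1}^c a_{(j)}\big)/\sqrt{\lambda}+(n-c)=p_k$, hence $\sqrt{\lambda}=\big(\sum_{j=1}^c a_{(j)}\big)/(p_k+c-n)$, with $p_k+c-n>0$ because the $n-c$ uncapped terms are strictly positive and sum to $p_k-(n-c)$. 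Substituting back yields exactly \eqref{eq:prob_dist}. Finally, consistency of the cap means $\pi^k_{(c)}\le1$ and $a_{(c+1)}/\sqrt{\lambda}\ge1$; rearranging the first gives the displayed upper bound on $p_k+c-n$, and a short monotonicity observation (that bound fails once $c$ is increased by one, which is exactly the second, lower-bound requirement for the previous $c$) shows that taking $c$ \emph{maximal} subject to $0<p_k+c-n$ and the displayed inequality produces a feasible point satisfying every KKT condition — hence the optimum.

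The Lagrangian calculus here is routine; the main obstacle is the bookkeeping around the threshold index $c$: showing that ``the largest integer satisfying the displayed two-sided bound'' coincides with the $c$ emitted by the water-filling construction, handling ties among the $a_{(i)}$ at the level $\sqrt{\lambda}$ (which leave $\pi^k$ unchanged but make $c$ nonunique), and treating the boundary cases $p_k=n$ and coordinates with $a_i=0$, where the denominator $\sum_{j=1}^c|[Q_k^\top\delta^k]_{(j)}|$ must still be nonzero for \eqref{eq:prob_dist} to make sense.
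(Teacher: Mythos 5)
Your proof is correct and is exactly the argument the paper has in mind: the paper gives no written proof beyond the remark that the theorem ``is immediate by deriving KKT conditions,'' and your water-filling KKT derivation supplies precisely those details, including the monotonicity argument needed to justify taking the \emph{largest} admissible $c$. One caveat: the consistency condition $\pi^k_{(c)}\le 1$ that you derive rearranges to $p_k+c-n\le\sum_{j=1}^{c}|[Q_k^\top\delta^k]_{(j)}|\,/\,|[Q_k^\top\delta^k]_{(c)}|$, with the sum running only to $c$, whereas the theorem as printed has the sum running to $n$; these are not equivalent (e.g., for $n=4$, $p_k=3$, and sorted values $1,1,5,10$, the printed condition selects $c=3$ and \eqref{eq:prob_dist} then yields $\pi^k_{(3)}=10/7>1$, while your condition correctly selects $c=2$). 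Your version is the one consistent with the closed form, so the printed upper limit $n$ appears to be a typo in the statement, but you should not assert that your rearrangement ``gives the displayed upper bound'' --- it gives a different, correct one, and that discrepancy should be flagged rather than elided.
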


\section{Basis Sketching}\label{sec:bask}
Having discussed all the components, we are now in a position to fully state a basis sketching model-based trust-region algorithm. Pseudocode is provided in \Cref{alg:bask}.

\begin{algorithm}[h!]
\caption{Basis Sketching Trust-Region Method}
\label{alg:bask}
\textbf{(Initialization)} Choose algorithmic constants $\eta_1, \eta_2, \Delta_{\max} > 0$
and $0 < \nu_1 < 1 <\nu_2$. \\
Choose initial point $x^0\in\Reals^n$ and initial trust-region radius $\Delta_0 \in (0, \Delta_{\max})$. \\
Initialize $\bar{g}^0\in\Reals^n$.\\
Initialize a bank of points $\cY$ with pairs $(x,f(x))$ for which $f(x)$ is known.\\
\For{$k=1,2,\dots$}
{
\textbf{(Get initial subspace)} Use \Cref{alg:subspace} to obtain $S_k$, $S_k^\perp$ and $Q_k$.\\
\textbf{(Choose sketch size and error estimate)} Choose $p_k$ and $\delta^k$. \label{line:choices}\\
\textbf{(Determine probabilities)} Compute $\pi^k$ according to \eqref{eq:prob_dist}.\\
\textbf{(Realize a random subset)} Generate $J_k$ using Bernoulli parameters $\pi^k$. \\
\textbf{(Perform additional function evaluations)} Evaluate $\{f(x^k + \Delta_k q^k_i):i\in J_k\}$ and update $\cY, S_k$ and $S_k^\perp$.\\
\textbf{(Choose interpolation set)} Use \Cref{alg:interpolation_set} to obtain $Y_k$. \\
\textbf{(Get model parameters)} Compute model gradient $\hat{g}^k$ and model Hessian $H^k$ from \eqref{eq:bask}. \\
\textbf{(Compute ameliorated estimator)} Compute $\tilde{g}^k$ via \eqref{eq:tildegk2}. \label{line:ameliorated}\\
\textbf{(Update average estimator)} Update $\bar{g}^k$ via \eqref{eq:closed-form-g} \label{line:barg}\\
\textbf{(Solve TRSP)} (Approximately) solve $\displaystyle\min_{y \in \Reals^n} m_k(y)\triangleq \tilde{g}^{k \top} y + \frac{1}{2}y^\top H^k y$ to obtain $d^k$. \label{line:trsp}\\
\textbf{(Evaluate new point)} Evaluate $f(x^k + d^k)$ and update $\cY$. \\
\textbf{(Determine acceptance)} Compute $\rho_k\gets \displaystyle\frac{f(x^k) - f(x^k+d^k)}{m_k(0) - m_k(d^k)}$. \\
 \eIf{$\rho_k\geq\eta_1$}{
$x^{k+1}\gets x^k + d^k$.}
{$x^{k+1}\gets x^k$.}
\textbf{(Trust-region adjustment)} \eIf{$\rho_k\geq\eta_1$}{
\eIf{$\|\tilde{g}^k\|\geq \eta_2\delta_k$}{
$\Delta_{k+1}\gets \min\{\nu_2\Delta_k,\Delta_{\max}\}.$}
{$\Delta_{k+1} \gets \nu_1\Delta_k$}
}
{
$\Delta_{k+1}\gets\nu_1\Delta_k$.
}
}
\end{algorithm} 

Summarily, the basis sketching method begins each iteration employing a method very much resembling Algorithm~4.1 in \cite{SW08}.
This method chooses an orthogonal matrix $Q_k$ such that a set of previously evaluated points in $Y_k$ satisfies the first condition of \Cref{thm:sfullylinear} for a sketching matrix defined by the transpose of the first few columns of $Q_k$.  
Pseudocode for this initial subspace-determining algorithm is stated in \Cref{alg:subspace}. 

\begin{algorithm}[h!]
\caption{Identify Initial Subspace}
    \label{alg:subspace}
    \textbf{Input:} Center point $x\in\Reals^n$, bank of evaluated points $\cY = \{(y^1,f(y^1),\dots,(y^{|\cY|},f(y^{|\cY|}))\}$ satisfying $(x, f(x))\in\cY$, trust region radius $\Delta$. \\
    \textbf{Initalize: } Choose algorithmic constants $c\geq 1$, $\theta_1\in(0,\frac{1}{c}]$.\\
    Set $S = \{s^1\} = \{0_n\}$. \\
    Set $S^\perp = I_n$. \\
    \For{$i=1,\dots,|\cY|$}{
        \If{$\|y^i-x\|\leq c\Delta$ and $\left|\text{proj}_{S^\perp}\left(\frac{1}{c\Delta}(y^i-x)\right)\right|\geq\theta_1$}
        {
            $S = S \cup \{y^i-x\}$\\
            Update $S^\perp$ to be an orthonormal basis for $\mathcal{N}([s^2 \cdots s^{|S|}])$\\
        }
        \If{$|S| = n + 1$}{
        \textbf{break} (the for loop)
        }
    }
    $S = [s^2, \cdots, s^{|S|}]$\\
    $Q = [S \hspace{0.5pc} S^\perp]$\\
    \textbf{Return: } $S, S^\perp, Q$ \\
\end{algorithm}

\Cref{alg:subspace} is a greedy procedure for selecting a subspace based on the bank of previously evaluated points $\cY$. 
\Cref{alg:subspace} maintains two orthogonal subspaces, $S$ and $S^\perp$, which are effectively initialized as $\{0_n\}$ and $\Reals^n$, respectively. 
If a point in the bank is both within distance $c\Delta$ from $x$, and has a sufficiently large projection onto the current subspace $S^\perp$, then its displacement from $x$ is added to a set of vectors whose span is $S$. 
We then update a(n orthonormal) basis for $S^\perp$; although not explicit in the statement of \Cref{alg:subspace}, this is achieved in practice via an insertion into a maintained QR decomposition. 

Returning to \Cref{alg:bask}, given an expected sketch size $p_k$ and some estimate $\delta_k$ of $\bar{g}^{k-1} - \Egk$ (we will discuss practical means of choosing these quantities in \Cref{sec:practical}, but state the algorithm in full generality allowing for any choice of $p_k$ and $\delta_k$), we compute a probability distribution $\pi^k$ on the columns of $Q_k$. 
We then realize a random subset $J_k$ according to $n$ independent Bernoulli variables with respective probability parameters $\pi_i^k$, and evaluate $f$ at each of $\{x^k + \Delta q^k_i : i\in J_k\}$.
Appropriately splitting $Q_k$ into $S_k$ and $S_k^\perp$ based on the output of \Cref{alg:subspace} and the subsequent realization of $J_k$, we then choose an interpolation set for use in the subproblem \eqref{eq:bask}.
We will make the choice of interpolation set via \Cref{alg:interpolation_set}.

\Cref{alg:interpolation_set} is a greedy procedure for selecting an interpolation set from the bank of points $\cY$. 
Its explanation is a bit more involved and is moved to the appendix, so as not to distract from the explanation of the basis sketching method, but the procedure is derived in such a way to ensure good properties of the solution to \eqref{eq:bask}, illustrated in \Cref{thm:want_pd}. 

Continuing our summary of \Cref{alg:bask} with an interpolation set $Y_k$ in hand, we next solve the subproblem \eqref{eq:bask} to obtain model parameters. 
While we use the optimal parameter vector $\beta^*$ from \eqref{eq:bask} ``as is" to define a model Hessian, we use $S_k^\top \alpha^*$ as $\hat{g}^k$ in order to compute $\tilde{g}^k$ according to \eqref{eq:tildegk2}, and then update $\bar{g}^k$ according to \eqref{eq:closed-form-g}.
The quadratic model used in the $k$th iteration is thus the one with its degree two monomials defined by $\beta^*$ and degree one terms monomials by $\bar{g}^k$. 
This quadratic model is then minimized over a trust region to obtain a trial step, and a standard acceptance test and trust region radius update is performed.  

\section{Practical Considerations}
\label{sec:practical}
In this section, we concern ourselves with two practical considerations, the first of which prevents \Cref{alg:bask} from being directly implemented as written. 
\subsection{Choosing $\delta_k$ in \Cref{line:choices}}\label{sec:choosing_delta}
We recall that in \Cref{line:choices} of \Cref{alg:bask}, we must compute some approximation $\delta_k$, as defined in \eqref{eq:prob_dist}. 
As defined, $\delta_k$ is not particularly easy to approximate. 
This motivates several modifications to \Cref{alg:bask}, which we now describe and for which we provide some theoretical motivation. 

We begin by making the key observation that the results concerning unbiasedness and variance of the estimator $\tilde{g}^k$, respectively in \Cref{thm:unbiased} and \Cref{thm:variance},
hold \emph{regardless of the value of the previous iteration's average estimator}, $\bar{g}^{k-1}$. 
Thus, for the sake of choosing an ameliorated estimator $\tilde{g}^{k}$ in each iteration, we will replace $\bar{g}^{k-1}$ in \eqref{eq:tildegk2} with $0_n$.
In other words, we effectively replace \eqref{eq:tildegk2} in \Cref{line:ameliorated} of \Cref{alg:bask} with
\begin{equation}
    \label{eq:tildegk_instead}
    \tilde{g}^k \gets S_k^\top D(\pi^k,I_k)S_k\hat{g}^k.
\end{equation}
Per \Cref{thm:unbiased}, \eqref{eq:tildegk_instead} is still an unbiased estimator of an approximation of $\nabla f(x)$, but it exhibits a different variance.
However, when $x^k$ approaches a stationary point, the vector $0_n$ ought to become an increasingly good initial approximation of the gradient near stationarity, and hence the variance of \eqref{eq:tildegk_instead} as an estimator of $\nabla f(x^k)$ decreases proportionally with the stationarity $\|\nabla f(x^k)\|$. 

The substitution of \eqref{eq:tildegk_instead} has multiple practical effects on the overall logic of \Cref{alg:bask}.
First, it is apparent from \eqref{eq:tildegk_instead} that the gradient $\tilde{g}^k$ exists entirely in a (weighted) subspace determined by $S_k$.
Thus, it is practically desirable to solve a lower dimensional (the rank of $S_k$) trust region subproblem in each iteration.
To determine an appropriate Hessian approximation, we record several results, which are respectively extensions of \Cref{prop:closed-form}, \Cref{thm:unbiased} and \Cref{thm:variance}.
\begin{proposition}
    \label{prop:closed-form-H}
    Denote
    \begin{equation}
    \label{eq:barH}
    \bar{H}^k := \mat\left(
\begin{array}{rl}
\displaystyle\arg\min_{\alpha\in\Reals^{n\times n}} &\frac{1}{2}\|\vec(\alpha) - \vec(\bar{H}^{k-1})\|^2\\
 \text{s. to} & S_k \alpha S_k^\top = \hat{H}^{J_k}\\
\end{array}\right)
\end{equation}
The subproblem \eqref{eq:barH} admits a closed-form solution given by 
\begin{equation}
\label{eq:closed-form-H}
\bar{H}^k = \bar{H}^{k-1} - S_k^\top S_k \bar{H}^{k-1} S_k^\top S_k + S_k^\top\hat{H}^{J_k} S_k.
\end{equation}
\end{proposition}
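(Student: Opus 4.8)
The plan is to mirror the proof of Proposition~\ref{prop:closed-form} verbatim, the only new bookkeeping being that the decision variable is now a matrix and the Lagrange multiplier is therefore matrix-valued. First I would write the Lagrangian of \eqref{eq:barH} as $\tfrac12\|\vec(\alpha)-\vec(\bar H^{k-1})\|^2 - \langle M,\, S_k\alpha S_k^\top - \hat H^{J_k}\rangle$, where $\langle\cdot,\cdot\rangle$ is the trace inner product and $M\in\Reals^{p_k\times p_k}$. Stationarity in $\alpha$ gives $\alpha = \bar H^{k-1} + S_k^\top M S_k$, and primal feasibility is $S_k\alpha S_k^\top = \hat H^{J_k}$.

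Next I would substitute stationarity into primal feasibility. The crucial point, exactly as in Proposition~\ref{prop:closed-form}, is that $S_k$ has mutually orthonormal rows (it is the transpose of a submatrix of the columns of the orthonormal $Q_k$), so $S_kS_k^\top = I_{p_k}$; consequently $S_k(\bar H^{k-1}+S_k^\top M S_k)S_k^\top = S_k\bar H^{k-1}S_k^\top + M$, and feasibility forces $M = \hat H^{J_k} - S_k\bar H^{k-1}S_k^\top$. Plugging this $M$ back into the stationarity condition yields $\bar H^k = \bar H^{k-1} + S_k^\top\!\left(\hat H^{J_k} - S_k\bar H^{k-1}S_k^\top\right)S_k = \bar H^{k-1} - S_k^\top S_k\bar H^{k-1}S_k^\top S_k + S_k^\top \hat H^{J_k} S_k$, which is precisely \eqref{eq:closed-form-H}. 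To upgrade this stationary point to the unique global minimizer, I would invoke the same argument used at the end of the proof of Proposition~\ref{prop:closed-form}: \eqref{eq:barH} is a strictly convex quadratic program in $\vec(\alpha)$ with affine constraints, so the KKT conditions are both necessary and sufficient, and feasibility is automatic since the full-row-rank $S_k$ makes $\alpha\mapsto S_k\alpha S_k^\top$ surjective onto $\Reals^{p_k\times p_k}$.

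An equivalent and arguably cleaner route, which I may actually write instead, is to vectorize: since $\vec(S_k\alpha S_k^\top) = (S_k\otimes S_k)\vec(\alpha)$, problem \eqref{eq:barH} is literally an instance of the sketch-and-project problem \eqref{eq:sketch_and_project} with sketching matrix $S_k\otimes S_k$. Because $(S_k\otimes S_k)(S_k\otimes S_k)^\top = (S_kS_k^\top)\otimes(S_kS_k^\top) = I_{p_k^2}$, this Kronecker matrix again has orthonormal rows, so Proposition~\ref{prop:closed-form} applies directly and gives $\vec(\bar H^k) = \vec(\bar H^{k-1}) - (S_k^\top S_k \otimes S_k^\top S_k)\vec(\bar H^{k-1}) + (S_k\otimes S_k)^\top\vec(\hat H^{J_k})$; un-vectorizing via $\vec(AXB^\top) = (B\otimes A)\vec(X)$ recovers \eqref{eq:closed-form-H}.

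I do not expect a genuine obstacle here; the work is entirely routine KKT manipulation, and the only two points worth an explicit sentence are (i) keeping straight that $S_kS_k^\top = I_{p_k}$ while $S_k^\top S_k$ is merely the orthogonal projector onto $\mathrm{span}(s_1,\dots,s_{p_k})$, and (ii) noting that although $\alpha$ ranges over all of $\Reals^{n\times n}$, when $\bar H^{k-1}$ and $\hat H^{J_k}$ are symmetric the formula \eqref{eq:closed-form-H} is automatically symmetric, so restricting \eqref{eq:barH} to symmetric matrices (as is natural for a model Hessian) yields the same solution.
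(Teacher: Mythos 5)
Your proposal is correct and follows essentially the same route as the paper: the paper's proof likewise vectorizes the problem, writes the KKT conditions with the Kronecker sketching matrix $S_k\otimes S_k$, uses $(S_kS_k^\top)\otimes(S_kS_k^\top)=I$ to solve for the multiplier, and un-vectorizes via $ABC=\mat((C^\top\otimes A)\vec(B))$. (Your sign bookkeeping is in fact cleaner than the paper's, whose final substitution displays $-S_k^\top\hat{H}^{J_k}S_k$ where \eqref{eq:closed-form-H} requires $+S_k^\top\hat{H}^{J_k}S_k$ --- an evident typo that your derivation avoids.)
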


\begin{proof}
    The reasoning is essentially the same as in the proof of \Cref{prop:closed-form}. 
    The KKT conditions in \eqref{eq:barH} can be expressed as
\begin{equation*}
\begin{array}{rll}
\vec(\alpha) & = \vec(\bar{H}^{k-1}) - (S_k^\top\otimes S_k^\top)\mu & (stationarity)\\
(S_k \otimes S_k)\vec(\alpha) & = \vec(\hat{H}^{J_k}) & (primal feasibility),\\
\end{array}
\end{equation*}
Plugging the stationary condition into the primal feasibility condition,
$$(S_k \otimes S_k)\vec(\bar{H}^{k-1}) - (S_k \otimes S_k)(S_k^\top \otimes S_k^\top)\mu = \vec(\hat{H}^{J_k}).$$
Using properties of the Kronecker product and the orthonormality of $Q_k$, 
this simplifies to
$$\mu = (S_k \otimes S_k)\vec(\bar{H}^{k-1})- \vec(\hat{H}^{J_k}).$$
Plugging these Lagrange multipliers back into the stationarity condition, we obtain
\begin{equation*}
\begin{array}{ll}
\vec(\alpha) & = \vec(\bar{H}^{k-1}) - (S_k^\top\otimes S_k^\top) (S_k \otimes S_k)\vec(\bar{H}^{k-1})- (S_k^\top\otimes S_k^\top) \vec(\hat{H}^{J_k})\\
& = \vec(\bar{H}^{k-1}) - (S_k^\top S_k \otimes S_k^\top S_k) \vec(\bar{H}^{k-1})- (S_k^\top \otimes S_k^\top ) \vec(\hat{H}^{J_k})\\
& = \vec(\bar{H}^{k-1}) - S_k^\top S_k \bar{H}^{k-1} S_k^\top S_k - S_k^\top \hat{H}^{J_k} S_k,
\end{array}
\end{equation*}
where we have used the property that for appropriately sized matrices $A, B,$ and $C$, 
$$ABC = \mat((C^\top\otimes A)\vec(B)).$$
\end{proof}

We state the next two results without proof, but note that by using $\vec$ and $\mat$ operators as in the proof of \Cref{eq:closed-form-H}, the proofs are virtually the same as those of \Cref{thm:unbiased} and \Cref{thm:variance}, respectively.

\begin{theorem}
\label{thm:unbiased_H}
    Suppose for each $J_k$, we can compute $\hat{H}_{J_k}$ satisfying 
    $\|S_k^\top (\hat{H}_{J_k} - \nabla^2 f(x)) S_k\| \leq \kappa_{eH}$ for all $x\in\cB(x^k, \Delta_k)$ and for some $\kappa_{eH}\in[0,\infty)$.  
    Let
    \begin{equation}
        \label{eq:tildeHk}
        \tilde{H}^k = \bar{H}^{k-1} - S_k^\top D(\pi^k, J_k) S_k \bar{H}^{k-1} S_k^\top D(\pi^k, J_k) S_k + S_k^\top D(\pi^k, J_k) \hat{H}^{J_k} D(\pi^k, J_k) S_k.
    \end{equation}
    Then, 
    $$\|\mathbb{E}_{J_k}[\tilde{H}^k] - \nabla^2 f(x)\| \leq n\kappa_{eH}$$
    for all $x\in \cB(x^k, \Delta_k)$. 
\end{theorem}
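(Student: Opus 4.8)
The plan is to mirror the proof of \Cref{thm:unbiased}, but carried out at the level of vectorized matrices exactly in the style of the proof of \Cref{prop:closed-form-H}. First I would take the expectation of \eqref{eq:tildeHk} over $J_k$, rewriting each product of the form $S_k^\top D(\pi^k,J_k)\,(\cdot)\,D(\pi^k,J_k)S_k$ with the identity $\vec(ABC)=(C^\top\otimes A)\vec(B)$. The control-variate construction of \eqref{eq:tildeHk} is designed so that the two terms containing $\bar{H}^{k-1}$ cancel in expectation; the second-moment identity that makes this cancellation work is precisely $\mathbb{E}_{J_k}[P(J_k)^\top P(J_k)] = Q_kD_kQ_k^\top$ (equivalently, the vanishing-of-cross-terms computation) already performed in the proof of \Cref{thm:variance}, applied here with a single power of $D(\pi^k,J_k)$ on each side rather than its square. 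What remains is then
\[
\mathbb{E}_{J_k}[\tilde{H}^k] \;=\; \sum_{J_k} p(J_k)\; S_k^\top D(\pi^k,J_k)\,\hat{H}^{J_k}\,D(\pi^k,J_k)\,S_k ,
\]
the matrix analogue of the expression $\sum_{J_k} p(J_k)\sum_{i\in J_k}\tfrac{1}{\pi_i}[q_k^i q_k^{i\top}]\hat{g}_{J_k}$ reached midway through the proof of \Cref{thm:unbiased}.

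Second, I would rotate into the orthonormal frame $Q_k$ and estimate entrywise. By orthonormal invariance of the spectral norm, $\|\mathbb{E}_{J_k}[\tilde{H}^k]-\nabla^2 f(x)\| = \|Q_k^\top(\mathbb{E}_{J_k}[\tilde{H}^k]-\nabla^2 f(x))Q_k\|$. Since the columns of $S_k^\top$ are exactly $\{q_k^i : i\in J_k\}$, weighted by $1/\pi_i$ through $D(\pi^k,J_k)$, the $(j,\ell)$ entry of $Q_k^\top\big(S_k^\top D(\pi^k,J_k)\hat{H}^{J_k}D(\pi^k,J_k)S_k\big)Q_k$ vanishes unless $j,\ell\in J_k$, in which case it equals $\tfrac{1}{\pi_j\pi_\ell}\,q_k^{j\top}\hat{H}_{J_k}q_k^\ell$ (with $\hat{H}_{J_k}$ the $n\times n$ model Hessian whose $S_k$-compression is $\hat{H}^{J_k}$). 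Summing over realizations and using the independence identities $\sum_{J_k:\,j,\ell\in J_k}p(J_k)=\pi_j\pi_\ell$ (for $j\neq\ell$) and $\sum_{J_k:\,j\in J_k}p(J_k)=\pi_j$ to subtract $\nabla^2 f(x)$ in the same basis gives, for $j\neq\ell$,
\[
\big[Q_k^\top(\mathbb{E}_{J_k}[\tilde{H}^k]-\nabla^2 f(x))Q_k\big]_{j\ell} = \frac{1}{\pi_j\pi_\ell}\sum_{J_k:\,j,\ell\in J_k} p(J_k)\;q_k^{j\top}\!\big(\hat{H}_{J_k}-\nabla^2 f(x)\big)q_k^\ell .
\]
For every $J_k$ containing both $j$ and $\ell$, the vectors $q_k^j,q_k^\ell$ are columns of $S_k^\top$, so $|q_k^{j\top}(\hat{H}_{J_k}-\nabla^2 f(x))q_k^\ell| \le \|S_k^\top(\hat{H}_{J_k}-\nabla^2 f(x))S_k\| \le \kappa_{eH}$ for all $x\in\cB(x^k,\Delta_k)$ by hypothesis; combined with $\sum_{J_k:\,j,\ell\in J_k}p(J_k)=\pi_j\pi_\ell$ this bounds the entry by $\kappa_{eH}$, and the diagonal entries are treated analogously.

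Third, since every entry of $Q_k^\top(\mathbb{E}_{J_k}[\tilde{H}^k]-\nabla^2 f(x))Q_k$ is at most $\kappa_{eH}$ in modulus, its spectral norm is at most $n\kappa_{eH}$ — for instance because it is dominated by the Frobenius norm, which is at most $\sqrt{n^2\kappa_{eH}^2}=n\kappa_{eH}$ — and by orthonormal invariance $\|\mathbb{E}_{J_k}[\tilde{H}^k]-\nabla^2 f(x)\|\le n\kappa_{eH}$, as claimed. The factor $n$ here, in contrast to the $\sqrt{n}$ of \Cref{thm:unbiased}, is precisely the cost of passing from an $\ell_2$ bound on an $n$-vector with small coordinates to a spectral-norm bound on an $n\times n$ matrix with uniformly small entries.

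I expect the main obstacle to be exactly the step the statement summarizes as ``virtually the same'': verifying the cancellation of the $\bar{H}^{k-1}$ contributions and, in particular, the behavior of the diagonal entries. Unlike the gradient setting, where a single random projection $P(J_k)$ acts on $\bar{g}^{k-1}$ and unbiasedness of \eqref{eq:tildegk2} follows from linearity of expectation, here $\bar{H}^{k-1}$ is sandwiched between two \emph{dependent} copies of $S_k^\top D(\pi^k,J_k)S_k$, so the cancellation genuinely invokes the second-moment computation of \Cref{thm:variance}, and one must keep the two $D(\pi^k,J_k)$ factors separate throughout the $\vec$/$\mat$ algebra (most delicately on the diagonal, where the two Bernoulli indicators coincide, so that $\mathbb{E}[\one[j\in J_k]/\pi_j^2]=1/\pi_j$ rather than $1$). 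A related but more routine piece of bookkeeping is pinning down the lift relating the low-dimensional object $\hat{H}^{J_k}$ appearing in \eqref{eq:tildeHk} to the operator $\hat{H}_{J_k}$ on which the hypothesis $\|S_k^\top(\hat{H}_{J_k}-\nabla^2 f(x))S_k\|\le\kappa_{eH}$ is stated, so that the cancellation and the entrywise estimate are simultaneously valid.
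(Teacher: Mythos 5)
The paper states this theorem \emph{without proof}, remarking only that the argument is ``virtually the same'' as that of \Cref{thm:unbiased} once everything is vectorized; your architecture (vectorize, take expectations using Bernoulli independence, rotate into the $Q_k$ frame, bound entrywise, pass to the spectral norm through the Frobenius norm) is exactly the route that remark points to, and both your off-diagonal analysis and the final Frobenius step giving the factor $n$ are sound.

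There is, however, a genuine gap at precisely the step you flag as ``delicate,'' and it is not mere bookkeeping: the claimed cancellation of the $\bar{H}^{k-1}$ terms fails on the diagonal. Writing $P(J_k)=S_k^\top D(\pi^k,J_k)S_k=\sum_{i\in J_k}\pi_i^{-1}q_k^iq_k^{i\top}$, independence gives $\mathbb{E}\left[\one[i\in J_k]\one[j\in J_k]\right]=\pi_i\pi_j$ only for $i\neq j$; for $i=j$ it equals $\pi_i$, so
$\mathbb{E}_{J_k}\left[P(J_k)\bar{H}^{k-1}P(J_k)\right]=\bar{H}^{k-1}+\sum_{i=1}^n(\pi_i^{-1}-1)\,(q_k^{i\top}\bar{H}^{k-1}q_k^i)\,q_k^iq_k^{i\top}$, and the diagonal entries of $\mathbb{E}_{J_k}\left[S_k^\top D\hat{H}^{J_k}DS_k\right]$ in the $Q_k$ frame are likewise inflated by $\pi_i^{-1}$. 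The combined residual on the $(j,j)$ entry is $(1-\pi_j^{-1})(q_k^{j\top}\bar{H}^{k-1}q_k^j-q_k^{j\top}\nabla^2 f(x)q_k^j)$ plus a term of size $\pi_j^{-1}\kappa_{eH}$, neither of which is controlled by $\kappa_{eH}$. Indeed, for $n=1$, $\pi_1=\tfrac{1}{2}$, and an exact model Hessian $\hat{h}=\nabla^2 f$, one computes directly $\mathbb{E}[\tilde{H}^k]=2\hat{h}-\bar{H}^{k-1}$, so the claimed bound fails whenever $\bar{H}^{k-1}$ is inaccurate --- and the entire point of the construction is that no accuracy of $\bar{H}^{k-1}$ is assumed. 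Note also that the identity $\mathbb{E}[P(J_k)^\top P(J_k)]=Q_kD_kQ_k^\top\neq I_n$, which you invoke as the mechanism of the cancellation, is precisely the statement that the cancellation does \emph{not} occur. The argument (and, as written, the estimator \eqref{eq:tildeHk} itself) is repaired by replacing the weights $\pi_i^{-1}\pi_j^{-1}$ with reciprocal pairwise inclusion probabilities $\pi_{ij}^{-1}$, where $\pi_{ij}=\pi_i\pi_j$ for $i\neq j$ and $\pi_{ii}=\pi_i$; with that diagonal correction your entrywise argument goes through verbatim and yields the stated $n\kappa_{eH}$ bound.
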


\begin{theorem}\label{thm:variance_H}
    The variance of $\tilde{H}^k$, with respect to the distribution governing $J_k$, is
    \begin{equation}
        \label{eq:variance_H} 
        \mathbb{E}_{J_k} \left[ \left\|\tilde{H}^k - \mathbb{E}_{J_k}\left[\tilde{H}^k\right]\right\|^2\right] = \left\|\bar{H}^{k-1} - \mathbb{E}_{J_k}\left[\tilde{H}^k\right]\right\|^2_{Q_kD_kQ_k^\top - I_n}.
    \end{equation}
\end{theorem}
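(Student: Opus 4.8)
The plan is to follow the proof of \Cref{thm:variance} essentially line for line, but carried out in the vectorized space $\Reals^{n^2}$, exactly as the proof of \Cref{prop:closed-form-H} mirrored that of \Cref{prop:closed-form}. Write $A_{J_k} := S_k^\top D(\pi^k,J_k)S_k = \sum_{i\in J_k}\tfrac{1}{\pi_i^k}q_k^i q_k^{i\top}$. Using the identity $\vec(XBZ) = (Z^\top\otimes X)\vec(B)$ and the symmetry of $D(\pi^k,J_k)$, the definition \eqref{eq:tildeHk} becomes
\begin{equation*}
\vec(\tilde H^k) = \left(I_{n^2} - (A_{J_k}\otimes A_{J_k})\right)\vec(\bar H^{k-1}) + v(J_k), \qquad v(J_k):=\vec\!\left(S_k^\top D(\pi^k,J_k)\hat H^{J_k}D(\pi^k,J_k)S_k\right),
\end{equation*}
which is precisely of the form $\tilde g^k = (I_n - P(J_k))\bar g^{k-1} + v(J_k)$ that drives \Cref{thm:variance}, with the linear operator $\mathcal P(J_k):=A_{J_k}\otimes A_{J_k}$ playing the role of $P(J_k)$. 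Hence $\mathbb{E}_{J_k}[\|\tilde H^k - \mathbb{E}_{J_k}[\tilde H^k]\|^2] = \mathbb{E}_{J_k}[\|\vec(\tilde H^k)\|^2] - \|\mathbb{E}_{J_k}[\vec(\tilde H^k)]\|^2$ expands into the same five terms involving $\mathcal P^\top\mathcal P$, $\mathcal P$, $v$, $\mathcal Pv$, and $\|v\|^2$.

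The second step is to evaluate the five expectations $\mathbb{E}_{J_k}[\mathcal P(J_k)]$, $\mathbb{E}_{J_k}[\mathcal P(J_k)^\top\mathcal P(J_k)]$, $\mathbb{E}_{J_k}[v(J_k)]$, $\mathbb{E}_{J_k}[\mathcal P(J_k)v(J_k)]$, and $\mathbb{E}_{J_k}[\|v(J_k)\|^2]$, using only three facts: (i) independence of the $n$ Bernoulli trials, so $\mathbb{E}[\one[i\in J_k]\one[j\in J_k]] = \pi_i^k\pi_j^k$ for $i\neq j$ and $=\pi_i^k$ for $i=j$; (ii) orthonormality $q_k^{i\top}q_k^j = \delta_{ij}$, which collapses products of the rank-one terms and in particular gives $A_{J_k}^2 = S_k^\top D(\pi^k,J_k)^2 S_k = \sum_{i\in J_k}\tfrac{1}{(\pi_i^k)^2}q_k^i q_k^{i\top}$ and $A_{J_k}S_k^\top = S_k^\top D(\pi^k,J_k)$; and (iii) $\sum_{i=1}^n q_k^i q_k^{i\top} = I_n$ with $Q_kD_kQ_k^\top = \sum_{i=1}^n \tfrac{1}{\pi_i^k}q_k^i q_k^{i\top}$ by the definitions of $Q_k$ and $D_k$. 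These are exactly the identities used to compute $\mathbb{E}[P(J_k)]$, $\mathbb{E}[P(J_k)^\top P(J_k)]$, etc., in \Cref{thm:variance}; the only new feature is that each indicator and each weight now appears inside a Kronecker product, so that the single-index sums that reassembled into $I_n$ and $Q_kD_kQ_k^\top$ there become Kronecker squares reassembling into $I_{n^2}$ and $Q_kD_kQ_k^\top\otimes Q_kD_kQ_k^\top$ here. Substituting these into the five-term expansion and collecting, the cross terms telescope in the identical pattern (a difference of $a^\top Wa - 2a^\top Wb + b^\top Wb$ and $a^\top a - 2a^\top b + b^\top b$) seen at the end of the proof of \Cref{thm:variance}, with $a = \vec(\bar H^{k-1})$ and $b = \vec(\mathbb{E}_{J_k}[\tilde H^k])$, leaving $\|\bar H^{k-1} - \mathbb{E}_{J_k}[\tilde H^k]\|^2$ in the weighted norm of \eqref{eq:variance_H} (with the matrix-weighted-norm notation understood through the above Kronecker lift).

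The step that demands the most care -- the one not literally identical to \Cref{thm:variance} -- is the bookkeeping of coinciding indices in the double sums that arise because $A_{J_k}$ is itself a sum over $J_k$: forming $\mathcal P(J_k) = A_{J_k}\otimes A_{J_k}$ and $\mathcal P(J_k)^\top\mathcal P(J_k) = A_{J_k}^2\otimes A_{J_k}^2$ generates diagonal ($i=j$) and off-diagonal ($i\neq j$) contributions whose Bernoulli expectations differ, and one must carry out the $q_k^{i\top}q_k^j = \delta_{ij}$ collapse -- which turns a weight $1/\pi_i^k$ into $1/(\pi_i^k)^2$ inside $A_{J_k}^2$ -- \emph{before} taking expectations, or the reassembly into clean Kronecker squares fails; an analogous simplification is needed for $\mathcal P(J_k)v(J_k) = \vec(A_{J_k}S_k^\top D\hat H^{J_k}DS_k A_{J_k})$ via $A_{J_k}S_k^\top = S_k^\top D(\pi^k,J_k)$. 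Once these collapses are executed, everything reduces to the routine Kronecker-product algebra already exhibited in the proof of \Cref{prop:closed-form-H}, which is why the details are omitted from the main text.
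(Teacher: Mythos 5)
Your high-level plan---lift to $\Reals^{n^2}$ via $\vec$, set $\mathcal P(J_k) = A_{J_k}\otimes A_{J_k}$ with $A_{J_k} = S_k^\top D(\pi^k,J_k)S_k$, and rerun the five-term expansion from the proof of \Cref{thm:variance}---is exactly the route the paper gestures at (the paper states \Cref{thm:variance_H} without proof, saying only that the argument mirrors \Cref{thm:variance} under $\vec$ and $\mat$). The gap sits precisely in the step you flag as delicate: the claimed reassembly of the expectations into $I_{n^2}$ and $Q_kD_kQ_k^\top\otimes Q_kD_kQ_k^\top$ is false. Unlike $P(J_k)$ in the gradient case, which is \emph{linear} in the Bernoulli indicators, $\mathcal P(J_k)$ is \emph{quadratic} in them: writing $W_i = q_k^i q_k^{i\top}$,
$$\mathcal P(J_k) = \displaystyle\sum_{i,j}\one\left[i\in J_k\right]\one\left[j\in J_k\right]\frac{1}{\pi_i^k\pi_j^k}\,W_i\otimes W_j,$$
and since $\one[i\in J_k]^2 = \one[i\in J_k]$, the $i=j$ terms have expectation $\pi_i^k$ rather than $(\pi_i^k)^2$. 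Hence
$$\mathbb{E}_{J_k}\left[\mathcal P(J_k)\right] = I_{n^2} + \displaystyle\sum_{i=1}^n\left(\frac{1}{\pi_i^k}-1\right)W_i\otimes W_i \;\neq\; I_{n^2},$$
so the cancellation $\vec(\bar H^{k-1}) - \mathbb{E}_{J_k}[\mathcal P(J_k)]\vec(\bar H^{k-1}) = 0$ that drives the gradient proof does not occur. Likewise $\mathbb{E}_{J_k}[\mathcal P(J_k)^\top\mathcal P(J_k)] = \sum_{i\neq j}\frac{1}{\pi_i^k\pi_j^k}W_i\otimes W_j + \sum_i\frac{1}{(\pi_i^k)^3}W_i\otimes W_i$, which differs from $Q_kD_kQ_k^\top\otimes Q_kD_kQ_k^\top$ on the diagonal blocks. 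The telescoping you describe therefore does not close.

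To see concretely what the computation actually yields, expand in the orthonormal basis $\{q_k^i q_k^{j\top}\}$ with $\bar h_{ij} = q_k^{i\top}\bar H^{k-1}q_k^j$ and $\hat h_{ij}$ the corresponding coordinates of the sketched model Hessian; then \eqref{eq:tildeHk} reads $\tilde h_{ij} - \bar h_{ij} = \frac{\one[i\in J_k]\one[j\in J_k]}{\pi_i^k\pi_j^k}(\hat h_{ij}-\bar h_{ij})$. Its variance carries the weight $\frac{1}{\pi_i^k\pi_j^k}-1$ relative to $(\bar h_{ij}-\mathbb{E}_{J_k}[\tilde h_{ij}])^2$ on off-diagonal coordinates, but the weight $\frac{1}{\pi_i^k}-1$ on diagonal coordinates (after accounting for $\bar h_{ii}-\mathbb{E}_{J_k}[\tilde h_{ii}] = (\bar h_{ii}-\hat h_{ii})/\pi_i^k$). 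No single weight of the form $W\otimes W$ or $I_n\otimes W$ built from $Q_kD_kQ_k^\top - I_n$ reproduces both, so even under the ``Kronecker lift'' reading of the norm that you invoke, the right-hand side of \eqref{eq:variance_H} does not emerge; the identity does check out when $n=1$, which is why a scalar sanity check will not catch this. Repairing the mirror requires replacing the products $\pi_i^k\pi_j^k$ by pairwise inclusion probabilities (the arbitrary-sampling device), which changes the statement itself. Since the paper omits the proof entirely, this is a gap in the justification of \Cref{thm:variance_H} as such, not only in your write-up---but as written your argument does not establish \eqref{eq:variance_H}.
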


We note that we could extend \Cref{def:s_full_linearity} to produce a definition of $(S,\kappa_{ef},\kappa_{eg}, \kappa_{eH})$-fully quadratic models of $f(x)$.
Doing so would  enable a $\mathcal{O}(\Delta_k)$ error bound (as opposed to $\mathcal{O}(1)$) in \Cref{thm:unbiased_H}.
However, because our model-building procedure employs the subproblem \eqref{eq:bask} and uses \Cref{alg:interpolation_set} to determine an interpolation set that only guarantees an affinely independent subset of points, our algorithm is not intended to guarantee fully quadratic models in a subspace. 
Thus, such a fully quadratic extension is outside of the scope of this paper, but would be easily achieved. 
We also note a high-level similarity between the motivation for \eqref{eq:barH} and what is done in work on randomized Hessian estimation in \cite{leventhal2011randomized}. 

Taken together, \Cref{prop:closed-form-H}, \Cref{thm:unbiased_H} and \Cref{thm:variance_H} imply that, given the subspace gradient $D(\pi^k,J_k)S_k\tilde{g}^k \in \Reals^{rank(S_k)}$ implied by \eqref{eq:tildegk_instead}, 
and provided we maintain an average estimator $\bar{H}^{k-1}$ between iterations, 
a reasonable choice of corresponding model Hessian is the projection of the unbiased estimator
$\tilde{H}^k$ into the subspace defined by $S_k$, that is, the model Hessian is
$S_k\tilde{H^k}S_k^\top\in\Reals^{rank(S_k)\times rank(S_k)}$. 

To summarize these changes to \Cref{alg:bask}, we
\begin{itemize}
    \item Replace the ameliorated estimator update \eqref{eq:tildegk2} with \eqref{eq:tildegk_instead} in \Cref{line:ameliorated}
    \item Maintain an average estimator for the Hessian $\bar{H}^{k}$ via the update \eqref{eq:barH} in addition to the average estimator for the Hessian $\bar{g}$ in \Cref{line:barg}.
    \item Replace the subproblem in \Cref{line:trsp} with the lower dimensional subproblem 
    \begin{equation}
        y^* := \displaystyle\min_{y\in\Reals^{rank(S_k)}} \langle D(\pi^k,I_k)S_k\hat{g}^k, y\rangle  + \frac{1}{2}y^\top S_k\tilde{H^k}S_k^\top y,
    \end{equation}
    where $\tilde{H}^k$ is computed as in \eqref{eq:tildeHk}.
    The trial step is replaced with $d^k = S_k^\top y^*$.
\end{itemize}

With these changes made, a reasonable estimator for $\delta_k$ is simply $\bar{g}^k$, the (biased) estimate of a $\mathcal{O}(\Delta_k)$-accurate approximation to $\nabla f(x^k)$.

\subsection{Choosing $p_k$ in \Cref{line:choices}}
While we do not intend to prove convergence results for \Cref{alg:bask} in this paper, we note that \Cref{alg:bask} can be readily analyzed as a trust region method with probabilistic models and deterministic function values, which is within the scope of \cite{Gratton2017complexity}. 
Indeed, by viewing \Cref{alg:bask} through the lens of probabilistic models, we will derive a practical method for dynamically selecting $p_k$. 
In light of \Cref{thm:variance}, larger expected (and hence, realized) sketch sizes $p_k$ will intuitively yield estimators of lower variance, suggesting more stable convergence to a local minimizer of $f$.
In one extreme, and by our prior observations, the variance of the estimator $\tilde{g}^k$ is 0 when $p_k=n$; in this case, \Cref{alg:bask} is just a deterministic DFO trust-region method. 
We now state a result about the probabilistic accuracy of our models, using conservative (Markov inequality-derived) concentration inequalities. 

\begin{theorem}
\label{thm:prob_fl}
Let \Cref{ass:cd} hold. 
Suppose in the $k$th iteration of \Cref{alg:bask}, we denote
$$V:= \|\bar{g}^{k-1} - \mathbb{E}_{I_k}[\tilde{g}^k]\|^2_{Q_k D_k Q_k^\top - I_n},$$
the variance of $\tilde{g}^k$. 
Let $C > 0$ and let $\kappa_{eg}$ be as in \Cref{thm:sfullylinear}. 
If 
$V < n\kappa_{eg}^2\Delta_k^{2}$,
then, 
$$\|\tilde{g}^k - \nabla f(x^k)\| \leq (C+1)\sqrt{n}\kappa_{eg}\Delta_k$$
with probability $1 - \frac{1}{C^2}$
\end{theorem}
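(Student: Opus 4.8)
The plan is to split the error $\|\tilde{g}^k - \nabla f(x^k)\|$ via the triangle inequality into a \emph{bias} term $\|\Egk - \nabla f(x^k)\|$ and a random \emph{fluctuation} term $\|\tilde{g}^k - \Egk\|$, bound the first with the unbiasedness estimate \Cref{thm:unbiased}, and bound the second with a Markov-type concentration inequality driven by the variance formula \Cref{thm:variance}.

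First I would record that the model-building in \Cref{alg:bask} — choosing $Q_k$ via \Cref{alg:subspace}, realizing $J_k$, and solving \eqref{eq:bask} on the interpolation set returned by \Cref{alg:interpolation_set} — guarantees, through \Cref{thm:sfullylinear}, that for each realized $J_k$ the resulting model is $(S_{J_k},\kappa_{ef},\kappa_{eg})$-fully linear on $\cB(x^k,\Delta_k)$ with exactly the $\kappa_{eg}$ quoted in the statement. Hence the hypotheses of \Cref{thm:unbiased} are in force, giving the deterministic bound $\|\Egk - \nabla f(x^k)\| \leq \sqrt{n}\,\kappa_{eg}\Delta_k$ (this holds surely, not just with high probability).

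Next, for the fluctuation term, I would invoke \Cref{thm:variance}, which says precisely that the scalar $V$ in the statement equals $\mathbb{E}_{I_k}\!\big[\|\tilde{g}^k - \Egk\|^2\big]$. Applying Markov's inequality to the nonnegative random variable $\|\tilde{g}^k - \Egk\|^2$ with threshold $t = C^2 n \kappa_{eg}^2 \Delta_k^2$ yields
$$\mathbb{P}\Big(\|\tilde{g}^k - \Egk\| \geq C\sqrt{n}\,\kappa_{eg}\Delta_k\Big) \;\leq\; \frac{V}{C^2 n \kappa_{eg}^2 \Delta_k^2} \;<\; \frac{1}{C^2},$$
where the final strict inequality is exactly the standing hypothesis $V < n\kappa_{eg}^2\Delta_k^2$. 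So with probability at least $1 - 1/C^2$ we have $\|\tilde{g}^k - \Egk\| < C\sqrt{n}\,\kappa_{eg}\Delta_k$; on this event, combining with the bias bound through the triangle inequality gives $\|\tilde{g}^k - \nabla f(x^k)\| \leq (C+1)\sqrt{n}\,\kappa_{eg}\Delta_k$, as claimed.

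I do not expect a genuine obstacle: the argument is a routine chaining of \Cref{thm:unbiased}, \Cref{thm:variance}, and Markov's inequality. The only points needing a little care are (i) noting that the fluctuation bound is the sole probabilistic ingredient — the bias bound is deterministic given the algorithm's fully-linear model guarantee — so the failure probability is precisely that of the single Markov event, and (ii) correctly identifying the object $V$ appearing in the statement with the squared-deviation expectation $\mathbb{E}_{I_k}\!\big[\|\tilde{g}^k - \Egk\|^2\big]$ of \eqref{eq:variance}. A sharper concentration inequality could shrink the probability $1/C^2$, but the Markov bound already suffices for the stated result.
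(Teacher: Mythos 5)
Your proposal is correct and follows essentially the same route as the paper: the paper applies Chebyshev's inequality (which is precisely your Markov bound on the squared deviation $\|\tilde{g}^k - \Egk\|^2$) together with the hypothesis $V < n\kappa_{eg}^2\Delta_k^2$ to control the fluctuation term, and then combines with \Cref{thm:unbiased} for the bias term exactly as you do. Your write-up is simply a more explicit version of the paper's terse two-line argument.
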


\begin{proof}
By Chebyshev's inequality and the supposition on $V$,
$$\|\tilde{g} - \mathbb{E}_{I_k}[\tilde{g}^k]\| \leq C\sqrt{V} < C\sqrt{n}\kappa_{eg}\Delta_k$$
with probability $1-\frac{1}{C^2}$.
Combining this result with \Cref{thm:unbiased}, 
we get the desired result.
\end{proof}

In words, \Cref{thm:prob_fl} shows that the models defined by $\tilde{g}^k$ nearly satisfy the definition of being probabilistically fully linear as defined by, for instance, \cite{Gratton2017complexity, Cartis2015, Chen2017, BCMS2018}, when the variance $V$ is sufficiently small. 
Thus, given the approximation of $V$ (the quality of which is, of course, totally dependent on the quality of the estimate $\delta^k$), \Cref{thm:prob_fl} suggests employing the adaptive scheme in \Cref{alg:choose_p} for choosing a sketch size $p_k$.
In \Cref{alg:choose_p}, the constant $C$ has effectively absorbed the unknown constant $\kappa_{eg}$.
Choosing $C=0$ will force the estimator $\tilde{g}^k$ to have zero variance (that is, we will choose each column of $Q_k$ with probability one), while larger values of $C$ will result in the estimator $\tilde{g}^k$ exhibiting proportionally more variance. 

\begin{algorithm}[h!]
\caption{\label{alg:choose_p} Adaptive scheme to choose $p_k$}
\textbf{Input: } Trust-region radius $\Delta_k$, orthonormal matrix $Q_k$, accuracy parameter $C\geq 0$, error estimate $\delta^k$, and minimal expected size of sample $b_0\in(0,n)$. \\
$b \gets b_0$. \\
\While{$b\leq n$}{
Compute $\pi^k$ according to \eqref{eq:opt_prob} with $p_k=b$ and $\delta^k$.\\
\eIf{$\|\tilde\delta^k\|_{Q_k D_k Q_k^\top - I_{n}} \leq nC^2\Delta_k^2$}{
\textbf{return} $p_k = b$.}
{
$b\gets b+1$.\\
}
}
\end{algorithm}

\subsection{A note on the realization of $J_k$}
In this paper, for simplicity, our subproblem for determining \eqref{eq:prob_dist} only constrains the \emph{expected} size of a realized sample $|J_k|$.
In settings where function evaluations are particularly expensive, it may be very undesirable to under-utilize (or worse, over-utilize) available computational resources by not being able to control the actual, realized number of function evaluations performed in each iteration of \Cref{alg:bask}.
In \cite{SAMP2022}, we proposed a means to handle this via conditional Poisson sampling (see, e.g., \cite{chen1994weighted,chen2000general}). However, in this paper, we do not concern ourselves with this issue. Incorporating conditional Poisson sampling in the present work would be straightforward. 

\section{Numerical Results}\label{sec:numerical}
We implemented \Cref{alg:bask} using the existing \texttt{Matlab} software for \texttt{POUNDers} \footnote{The implementation of \texttt{POUNDers} is available as part of the Practical Optimization Using Structure (POptUS) repos, and is specifically available at \url{https://github.com/POptUS/IBCDFO}} as a starting point. 
In particular, the same set of parameters dictating trust-region dynamics are used in both methods, the trust-region subproblems are solved in the same way, and as described previously, our implementations of \Cref{alg:subspace} and \Cref{alg:interpolation_set} are very minor modifications of a model-building routine (\texttt{formquad.m}) that already existed in \texttt{POUNDers}.

While \texttt{POUNDers} was originally developed for nonlinear least squares problems \eqref{eq:nonlsq}, the most recent implementation allows for more general problems of the form \eqref{eq:unconstrained}. 
As an extension of this implementation of \texttt{POUNDers}, our implementation of \Cref{alg:bask}, which we call \texttt{SS-POUNDers}, or ``subspace \texttt{POUNDers}", can solve problems of both the general form \eqref{eq:unconstrained} as well as the specialized nonlinear least squares form \eqref{eq:nonlsq}. 
\texttt{POUNDers} (and hence, \texttt{SS-POUNDers}) achieves this by maintaining separate quadratic models of each component function $f_i(x)$, each parameterized by a component model gradient term $g_i\in\Reals^n$ and a component model Hessian term $H_i\in\Reals^{n\times n}$. 
In turn, these component models are combined to yield a full model gradient $g(x)$ and full model Hessian $H(x)$ via
\begin{equation}
\label{eq:fullspace}
g(x) = \displaystyle\sum_{i=1}^p f_i(x)g_i, \quad H(x) = \displaystyle\sum_{i=1}^p \left[g_i g_i^\top + H_i\right].
\end{equation}
By applying the average estimator update \eqref{eq:closed-form-g} to each component model gradient $g_i$ and the update \eqref{eq:closed-form-H} to each component model Hessian $H_i$, it is clear by linearity that we are effectively maintaining average estimators  of $g(x)$ and $H(x)$ in \eqref{eq:fullspace}. 

\subsection{Parameter Settings}
We use all the same default parameters as in \texttt{POUNDers}. 
In particular, in \Cref{alg:bask}, we use $\eta_1 = 0.05, \Delta_{\max} = 1000\Delta_0, \nu_1 = 0.5,$ and $\nu_2 = 2.0$.
In \Cref{alg:subspace}, which is again derived from the model-building routine of \texttt{POUNDers}, we use the \texttt{POUNDers} default settings of $c = \sqrt{n}$ and $\theta_1 = 10^{-5}$. 
Similarly, in \Cref{alg:interpolation_set}, we use the \texttt{POUNDers} defaults of $c = \sqrt{n}$ and $\theta_2 = 10^{-3}.$
In terms of parameters unique to \texttt{SS-POUNDers}, in \Cref{alg:bask}, we chose
$\eta_2 = 10^{-3}$ and in \Cref{alg:choose_p}, we chose $C=0.01\sqrt{n}$ and $b_0 = 1$. 

Finally, we make the observation that \Cref{alg:bask} intentionally leaves the choice of $\bar{g}^0$ open.
Naturally, if one had knowledge of a reasonable estimate of $\nabla f(x^0)$ (from previously obtained function evaluations, for instance), then one should employ that. 
In the assumed absence of that information, and in our tests, we tried two natural, but very different, choices of $\bar{g}^0$.
Our first choice was to proceed like \texttt{POUNDers} in the first iteration and simply spend $n+1$ function evaluations to compute an initial ($\mathcal{O}(\Delta_0)$-accurate) simplex gradient. 
The obvious disadvantage to this approach is that it spends $n$ function evaluations immediately.
The advantage to this approach is that despite our amendments to \Cref{alg:bask} discussed in \Cref{sec:choosing_delta}, it is still important to maintain a good approximation $\bar{g}^k$ for the sake of obtaining reasonable variance estimates, and hence sample sizes $p_k$, from \Cref{alg:choose_p}. 
Beginning the algorithm with a reasonably accurate $\bar{g}^0$ promises to lower the variance of estimators throughout the course of the algorithm.
Our second choice naively set $\bar{g}^0 = 0_n$. 
This second choice has completely opposite advantages and disadvantages to the first choice.
We pay no upfront cost to obtaining an ``approximation" to $\nabla f(x^0)$, but the approximation is completely arbitrary and our variance estimates will accordingly be arbitrary, likely for many iterations. 
We found in preliminary testing that the former choice was practically superior, and so we implement that in \texttt{SS-POUNDers}, and demonstrate only the results stemming from that choice in what follows. 

\subsection{Test Problems}
We employ two separate test sets, the Mor{\'e}-Wild testset as implemented in the ``Benchmarking DFO" (BenDFO) repoistory \cite{BENDFOCode}, and ``Yet Another Test Set for Optimization" (YATSOP) repository \cite{YATSOPCode}. 
Both test sets consist of unconstrained nonlinear least squares problems of the form \eqref{eq:nonlsq}.
The former test set, which we simply refer to as BenDFO in the remainder, 
is better-known and was first compiled from (mostly) extant problems in \cite{JJMSMW09}. 
For our purposes, we note that these 53 problems are all fairly low-dimensional, with $n$ ranging between 2 and 12. 
The latter test set, which we simply refer to as YATSOp, consists of 38 problems and are significantly larger in dimension, with $n$ ranging between 98 and 125. 
We chose all of the problems labelled ``midscale" in YATSOp. 

The decision to employ these two separate test sets was based on two considerations.
First is the fact that they have both been previously employed in DFO literature; BenDFO has been used very frequently, and YATSOp most recently in \cite{STARS2022}, with a superset of problems similar to YATSOp employed in \cite{cartis2019derivative, CRsubspace2021}. 
Secondly, the difference in average problem sizes between the two test sets is important for this study. 
Our intention is to demonstrate that for small problems (as represented in BenDFO), the performance of \texttt{SS-POUNDers} is \emph{comparable} to the performance of \texttt{POUNDers}. 
In low dimensions $n$, one intuitively expects that finding $n + 1$ affinely independent interpolation points is ``more likely". 
That is, potentially expensive geometry-improvement steps in \texttt{POUNDers} are less likely to occur. 
However, as $n$ increases, one expects (and often sees in practice) that finding $n+1$ affinely independent points is ``less likely", and \texttt{POUNDers} will therefore require geometry improvement steps more often. 
\texttt{SS-POUNDers}, on the other hand, does not have a geometry improvement step like \texttt{POUNDers}, because it only aims to ensure $S_k$-full linearity in each iteration, where $S_k$ was initially chosen by \Cref{alg:subspace} and then randomly augmented with points that are well-poised in the nullspace of the initial subspace chosen by \Cref{alg:subspace}. 
Based on this intuition, we expect that it is more likely to see \texttt{SS-POUNDers} outperform \texttt{POUNDers} on YATSOp than BenDFO, but we would hope that \texttt{SS-POUNDers} at least recovers the performance of \texttt{SS-POUNDers}.

\subsection{Performance Profiles}
We will use performance profiles (see, e.g., \cite{JJMSMW09}) to illustrate the relative performance of \texttt{POUNDers} and \texttt{SS-POUNDers}. 
Given a tolerance $\tau>0$, a set of solvers $\mathcal{S}$ and a set of problems $\mathcal{P}$, we count, for each combination of solver and problem, the number (normalized by $n+1$, where $n$ is the the problem dimension) $N_{solv,prob,\tau}$ of evaluations of $f(x)$ a solver $solv$ must make on problem $prob$ before it evaluates an $x$ such that 
$$f(x) \leq f(x^*) + \tau(f(x^0) - f(x^*)).$$
Remarkably, despite the nonconvexity of most of the problems in BenDFO and YATSOp, but owing to the original curation of these test sets, every run converges to a neighborhood of the known optimal solution $x^*$, and so the metric $N_{solv,prob,\tau}$ is reasonable for these tests.

A performance profile computes, for each problem $prob$,
$$N_{prob,\tau} = \displaystyle\min_{solv\in\mathcal{S}} N_{solv,prob,\tau}$$
and then plots $\alpha\geq 1$ on the $x$-axis and
$$\frac{1}{|\mathcal{P}|}\left|\left\{prob\in\mathcal{P} : N_{solv,prob,\tau} \leq\alpha N_{prob,\tau}\right\} \right|$$
on the $y$-axis for each solver $solv\in\mathcal{S}$.
In words, a performance profile shows, as a function of $\alpha$, the percentage of problems solved to tolerance $\tau$ by each solver within a number of budget units no more than $\alpha$ times larger than the least number of budget units required by any solver on the same problem. 

We run \texttt{SS-POUNDers} a total of 30 times on each problem, each with a different random seed. 
In the profiles in \Cref{fig:bendfoperfprofs}, we illustrate the \emph{median} performance of \texttt{SS-POUNDers} on BenDFO.
\begin{figure}
\caption{\label{fig:bendfoperfprofs} Performance profiles comparing the performance of \texttt{POUNDers} with the \emph{median} performance of \texttt{SS-POUNDers} on the (low-dimensional) BenDFO test set with convergence tolerances $\tau=10^{-1}$ (left), $\tau=10^{-3}$ (center), and $\tau=10^{-5}$ (right).}
    \centering
    \includegraphics[width=.3\linewidth]{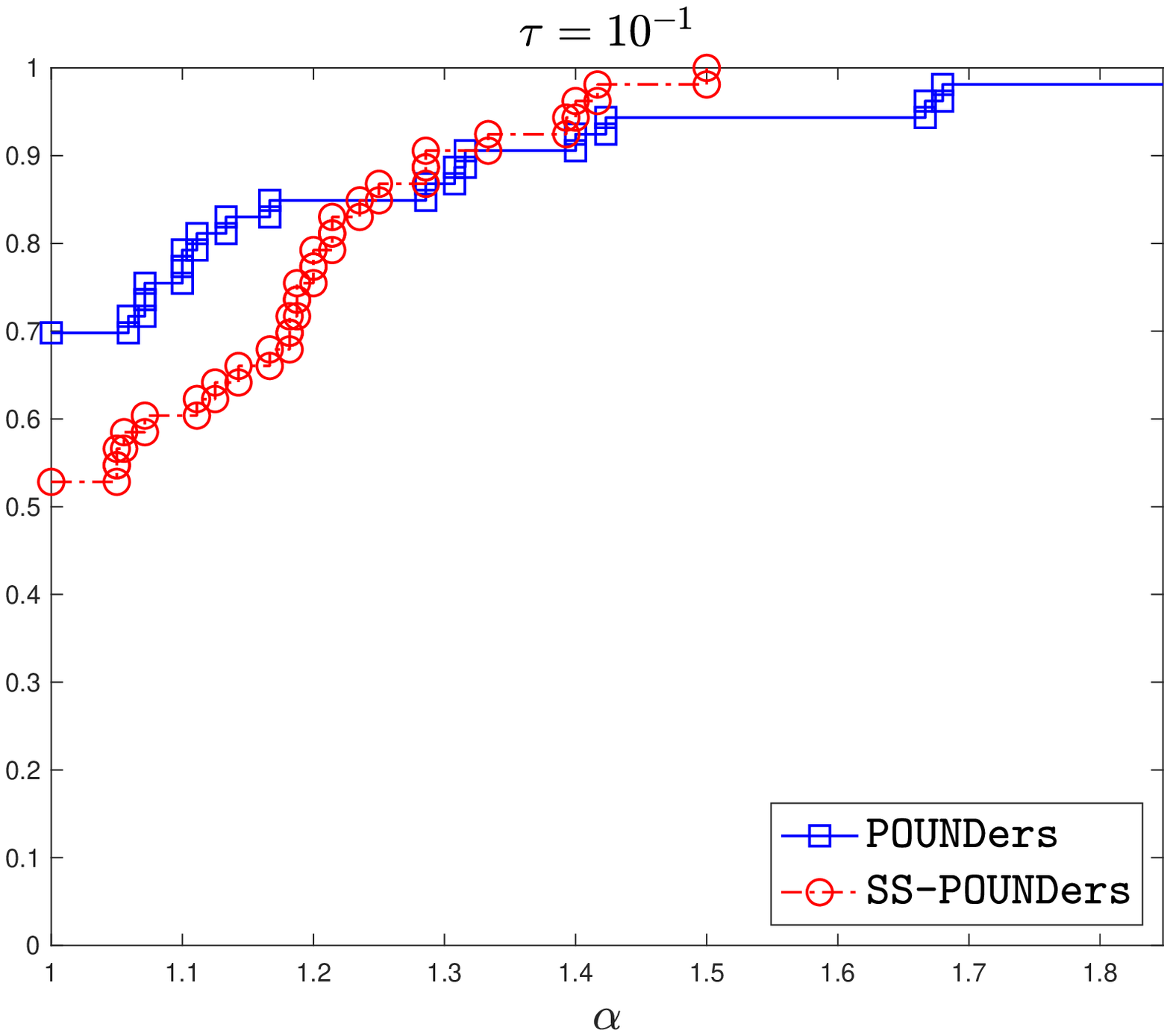}
    \includegraphics[width=.3\linewidth]{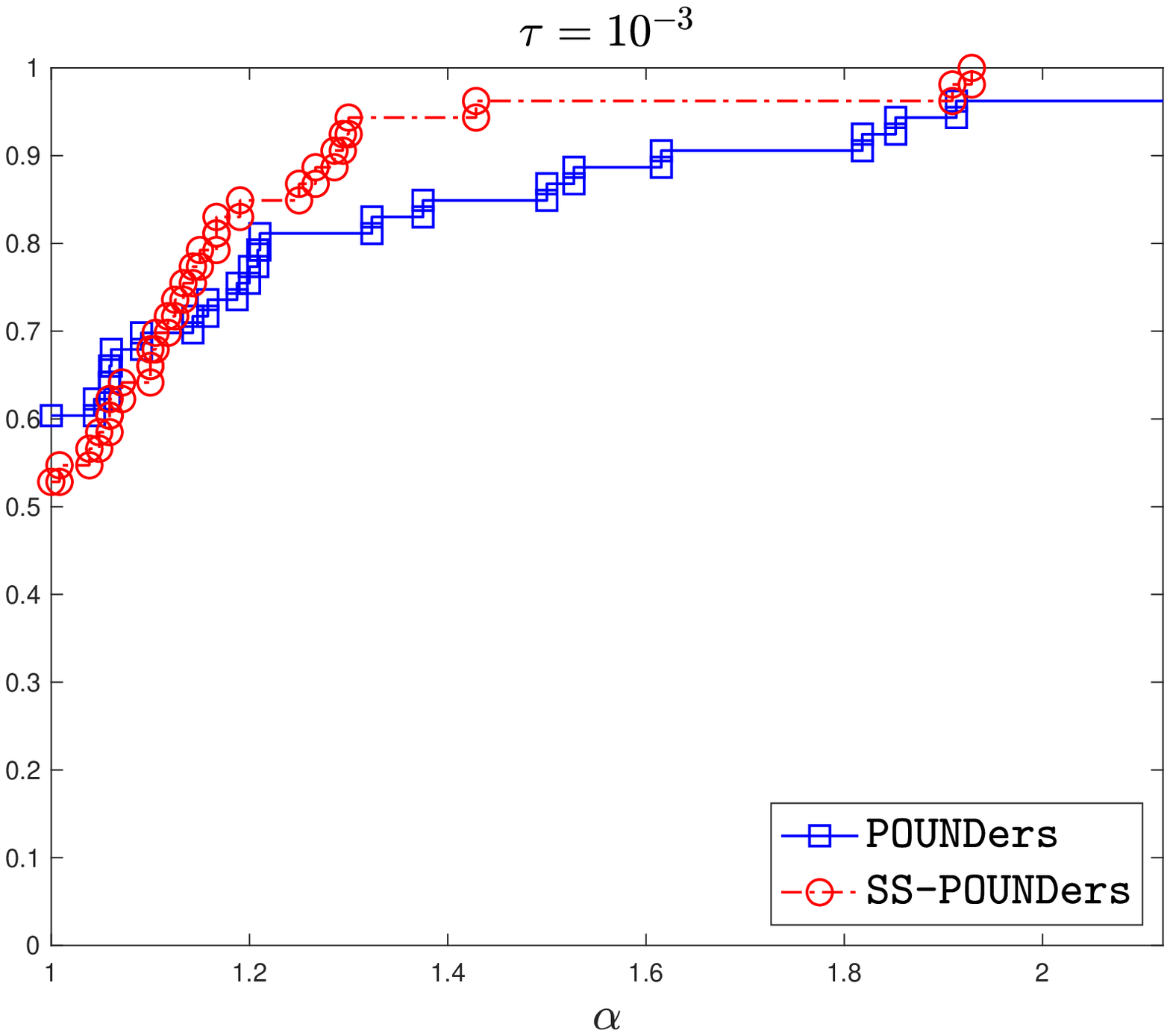}
    \includegraphics[width=.3\linewidth]{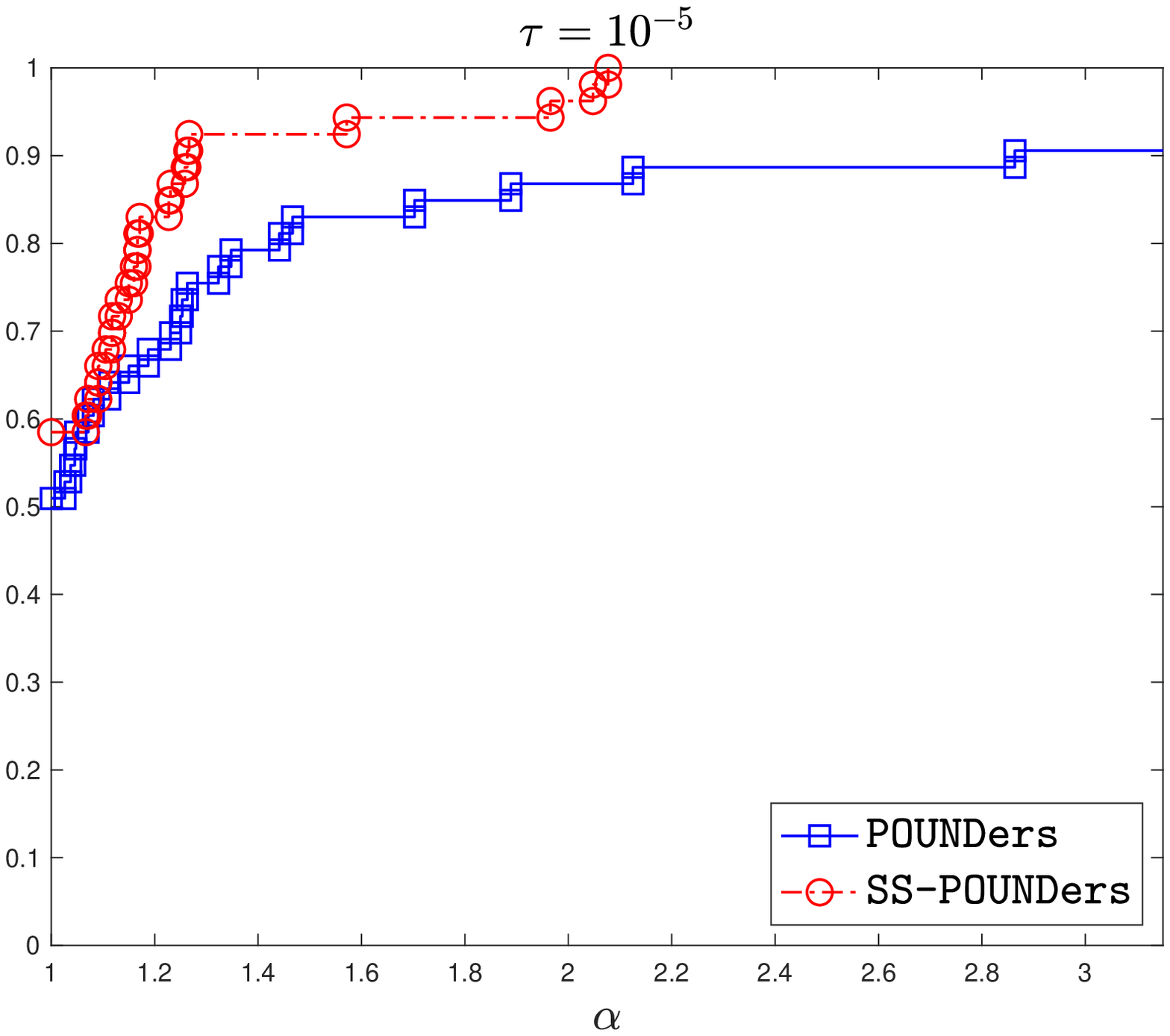}
\end{figure}

In the profiles in \Cref{fig:yatsopperfprofs}, we illustrate the median performance of of \texttt{SS-POUNDers} on YATSOp. 
In both BenDFO and YATSOp, we see that as the convergence tolerance $\tau$ becomes tighter, there is an argument for an increasing preference for \texttt{SS-POUNDers}. 
In fact, when we demand a tight convergence tolerance ($\tau=10^{-5}$), these results demonstrate that the median performance of \texttt{SS-POUNDers} completely dominates the performance of \texttt{POUNDers.}
Although not a perfect explanation, one intuition for this phenomenon may be that the longer an algorithm runs (and hence becomes closer to convergence), the more opportunities \texttt{SS-POUNDers} is presented to randomize, and hence avoid the geometry improvement steps that \texttt{POUNDers} must take.

\begin{figure}
\caption{\label{fig:yatsopperfprofs} Performance profiles comparing the performance of \texttt{POUNDers} with the \emph{median} performance of \texttt{SS-POUNDers} on the (higher-dimensional) YATSOp test set with convergence tolerances $\tau=10^{-1}$ (left), $\tau=10^{-3}$ (center), and $\tau=10^{-5}$ (right).}
    \centering
    \includegraphics[width=.3\linewidth]{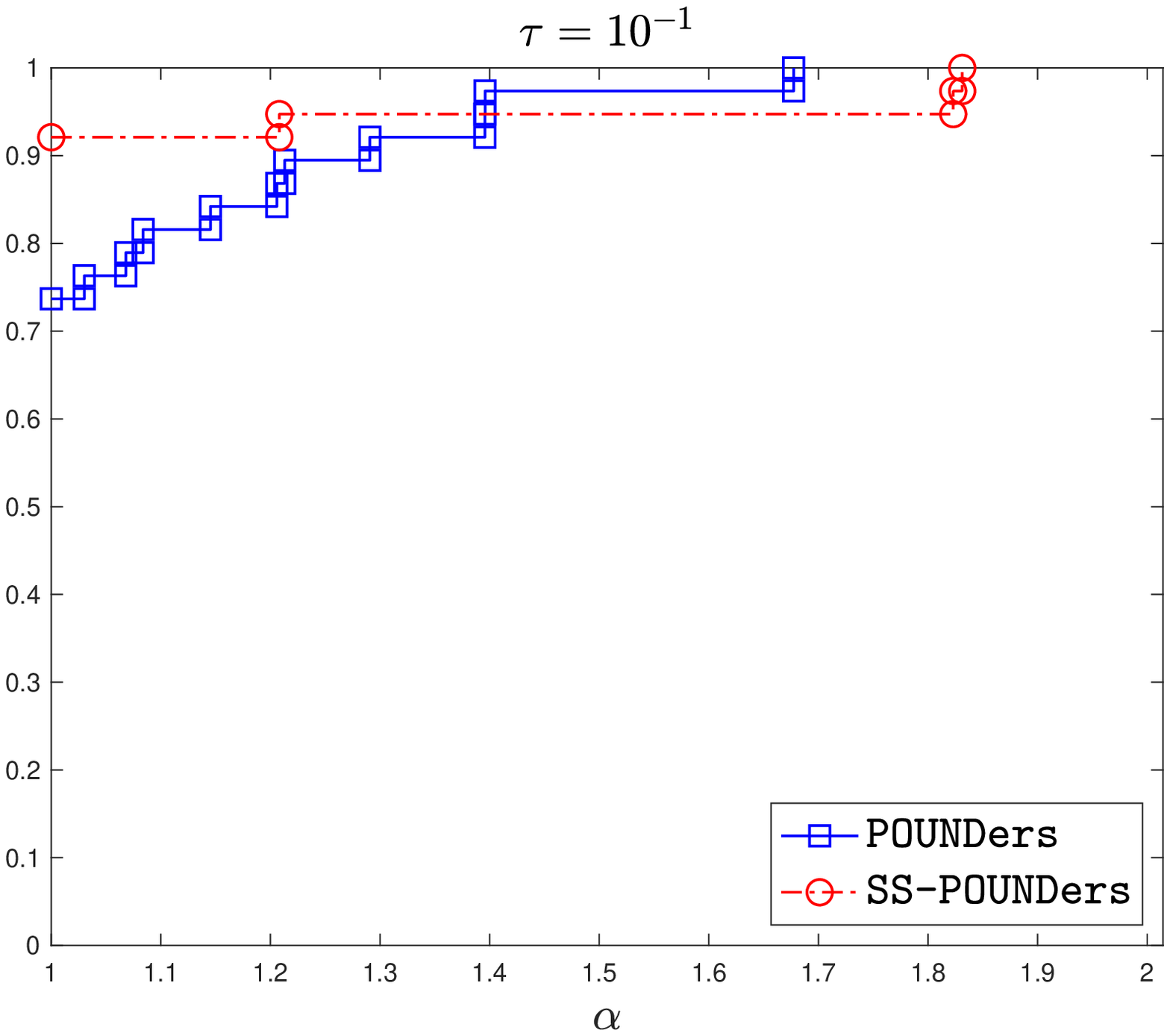}
    \includegraphics[width=.3\linewidth]{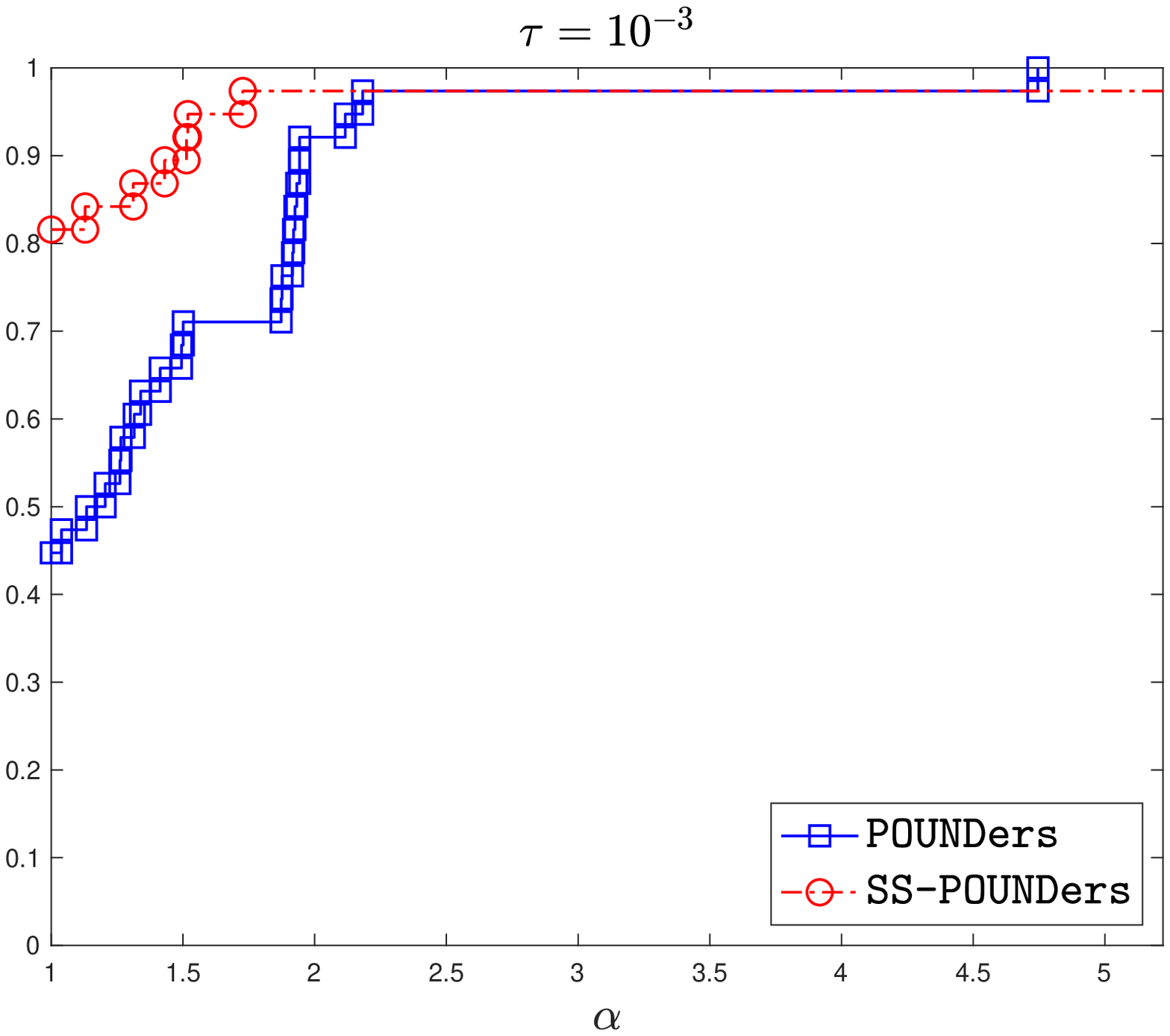}
    \includegraphics[width=.3\linewidth]{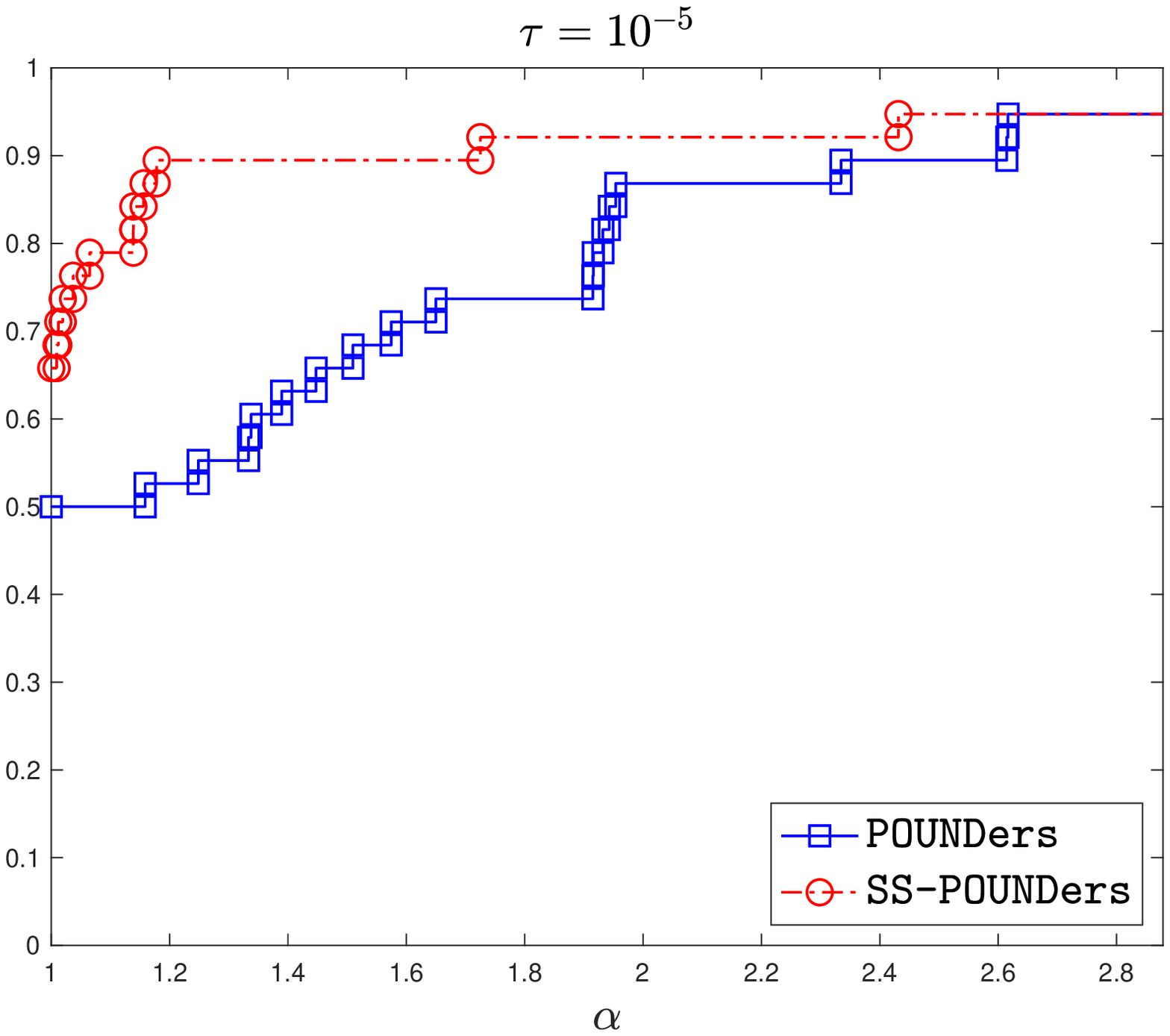}
\end{figure}

\Cref{fig:bendfoperfprofs} and \Cref{fig:yatsopperfprofs} are certainly encouraging, but because \texttt{SS-POUNDers} is a \emph{randomized} method, we must demonstrate that performance is reasonable even in the worst case, in order to demonstrate the robustness of a randomized approach.
Towards that end, we use the same data as in the previous figures, but show the \emph{worst-case} performance of \texttt{SS-POUNDers} across all 30 trials performed. 
These results are shown in \Cref{fig:worstbendfo} and \Cref{fig:worstyatsop}. 

\begin{figure}
\caption{\label{fig:worstbendfo} Performance profiles comparing the performance of \texttt{POUNDers} with the \emph{worst-case} performance of \texttt{SS-POUNDers} on the (low-dimensional) BenDFO test set with convergence tolerances $\tau=10^{-1}$ (left), $\tau=10^{-3}$ (center), and $\tau=10^{-5}$ (right).}
    \centering
    \includegraphics[width=.3\linewidth]{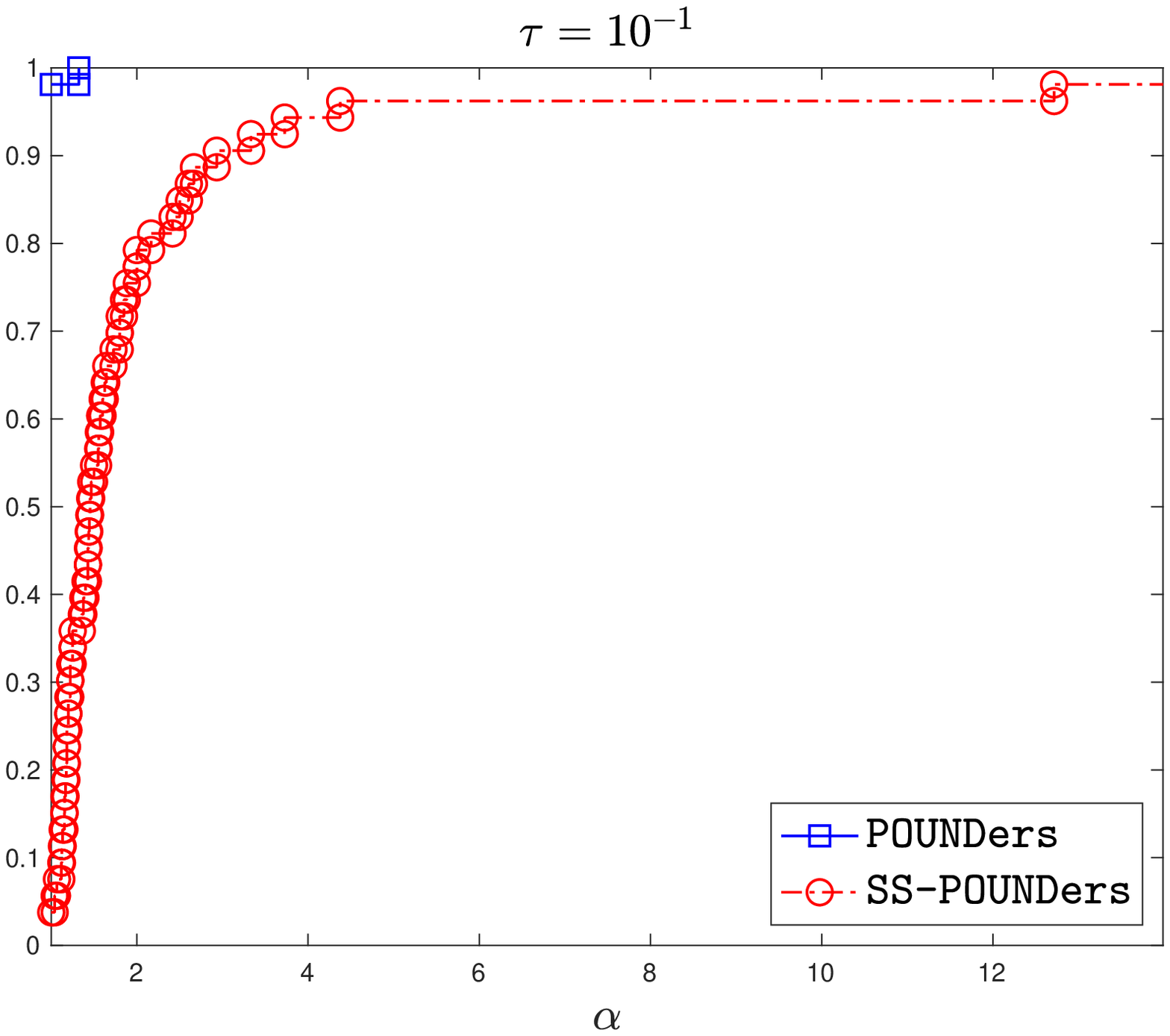}
    \includegraphics[width=.3\linewidth]{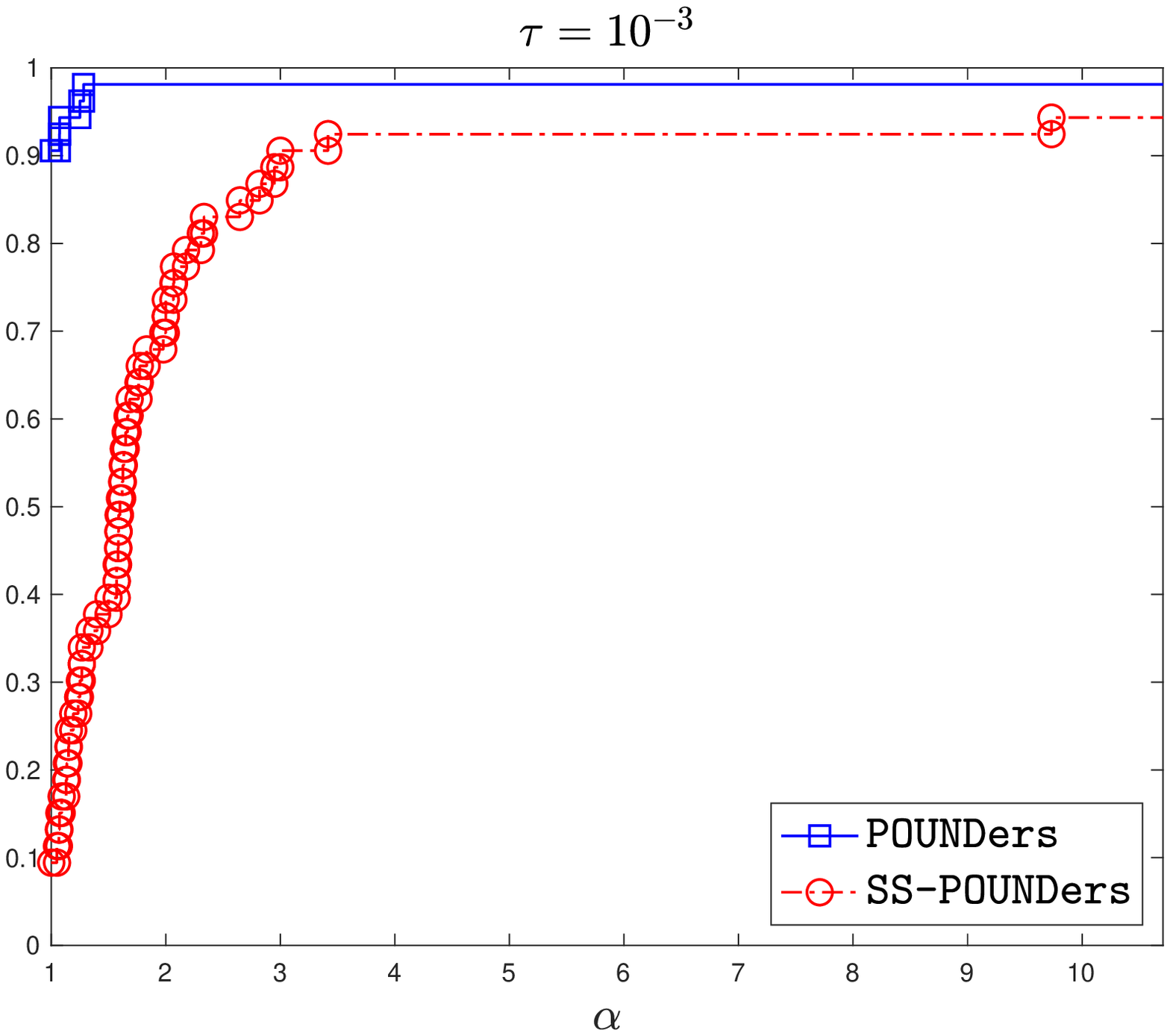}
    \includegraphics[width=.3\linewidth]{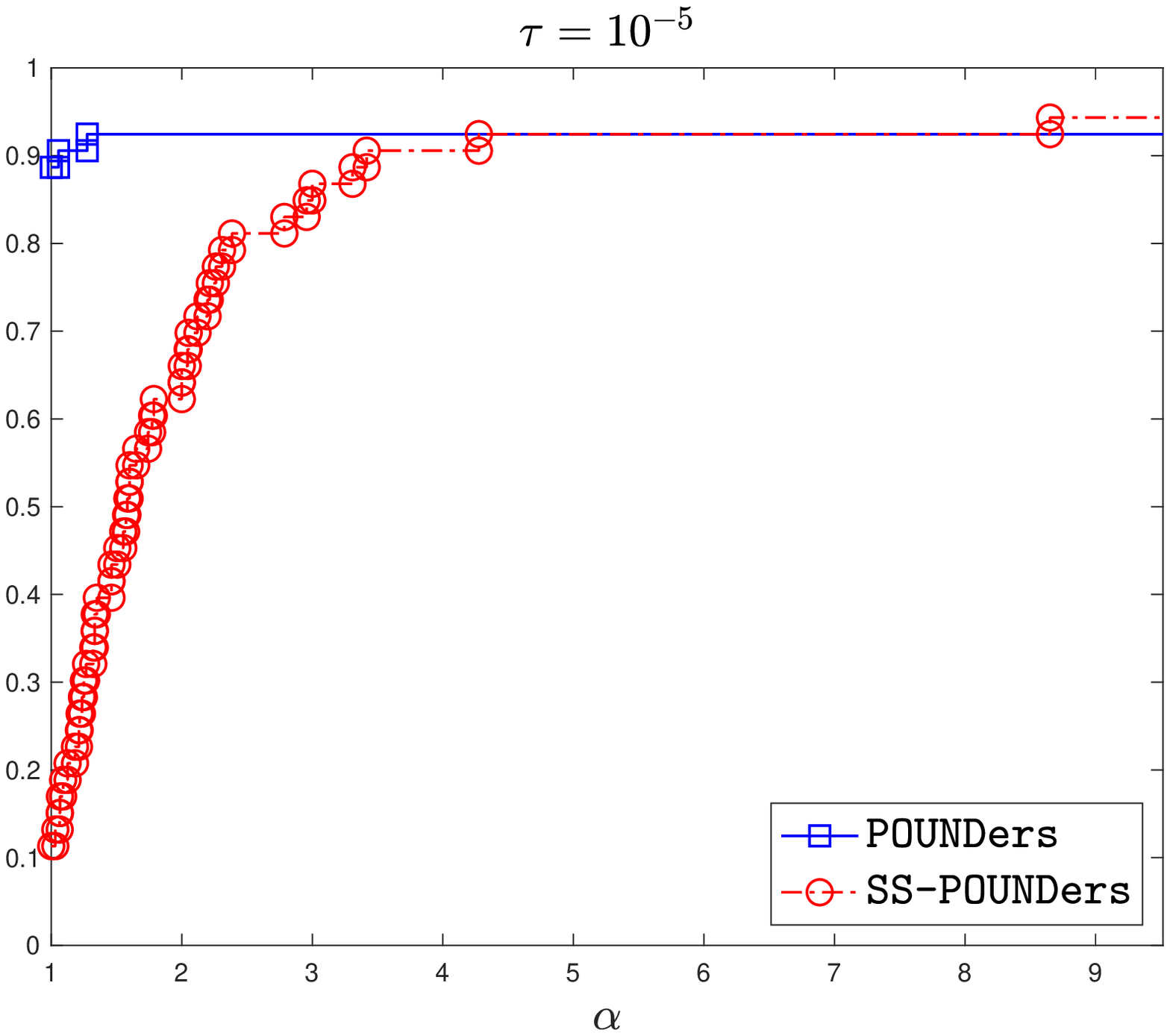}
\end{figure}

\begin{figure}
\caption{\label{fig:worstyatsop} Performance profiles comparing the performance of \texttt{POUNDers} with the \emph{worst-case} performance of \texttt{SS-POUNDers} on the (higher-dimensional) YATSOp test set with convergence tolerances $\tau=10^{-1}$ (left), $\tau=10^{-3}$ (center), and $\tau=10^{-5}$ (right).}
    \centering
    \includegraphics[width=.3\linewidth]{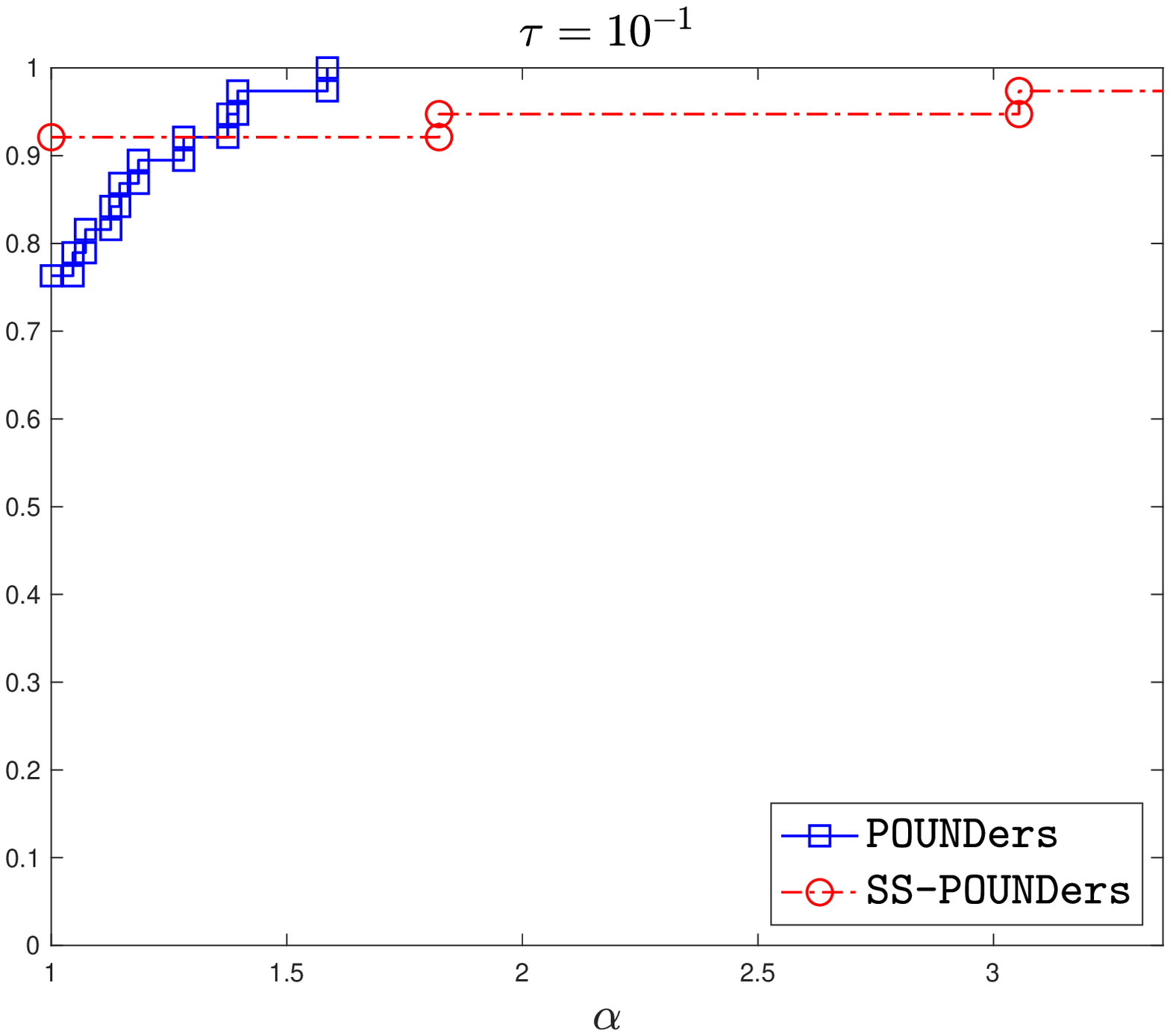}
    \includegraphics[width=.3\linewidth]{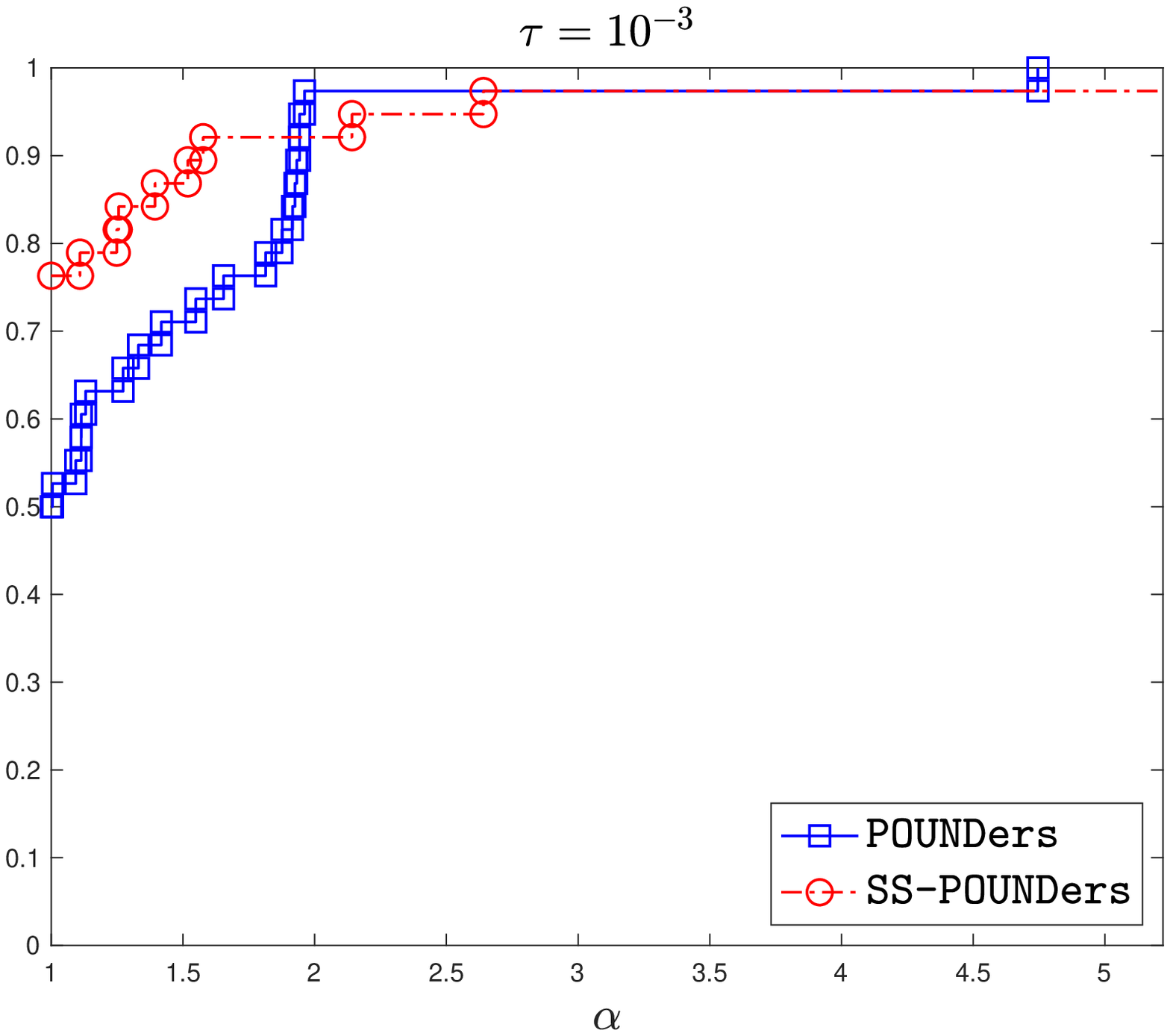}
    \includegraphics[width=.3\linewidth]{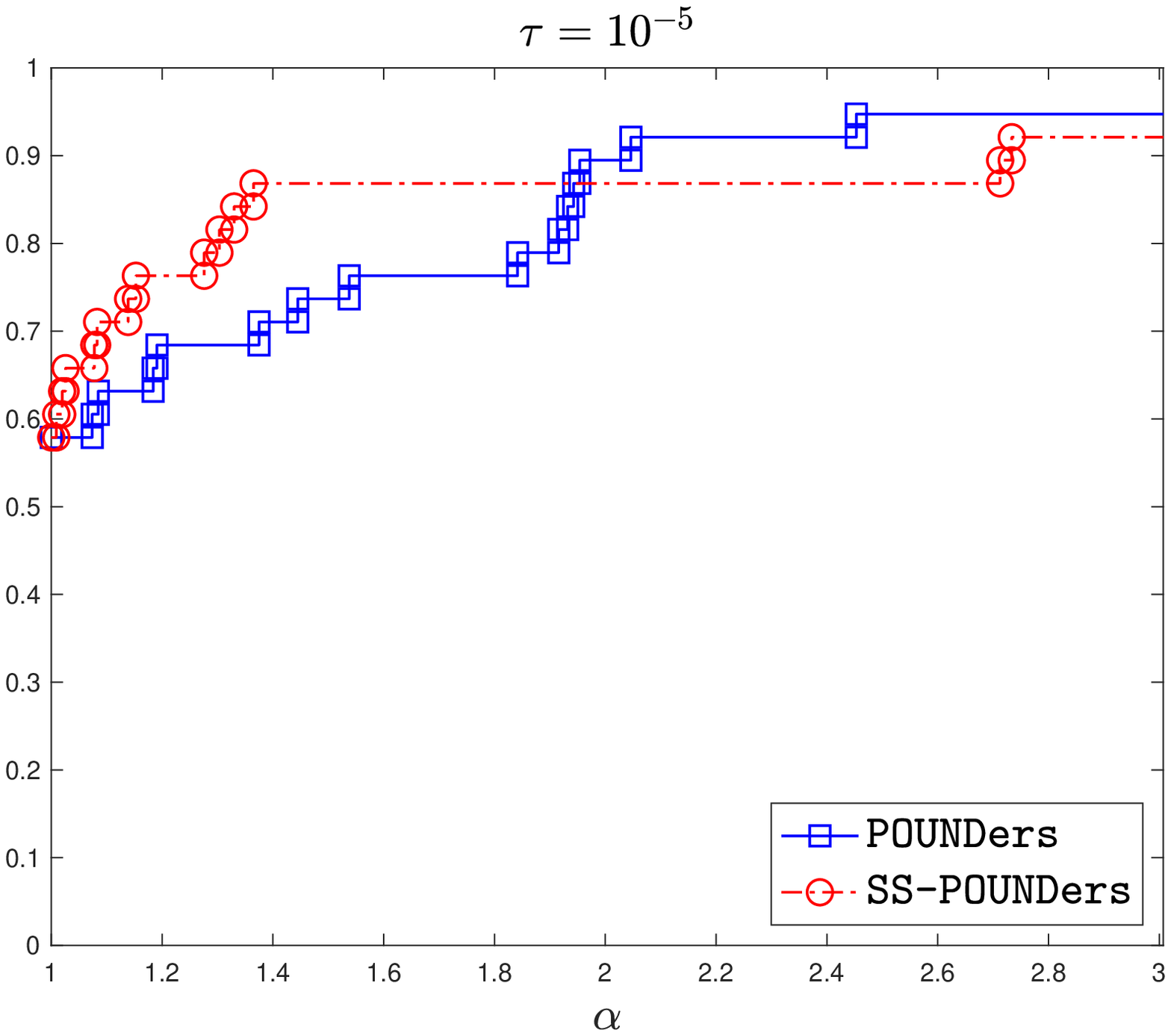}
\end{figure}

The results in \Cref{fig:worstbendfo} show that, in this lower-dimensional setting of BenDFO problems, randomization can, \emph{in the worst case}, harm the method fairly considerably. with the majority of problems being solved up to 5 times slower by \texttt{SS-POUNDers} than by \texttt{POUNDers}. 
However, in line with our previous intuition, on the higher dimensional problems, we see that for tighter convergence tolerances $(\tau\in\{10^{-3},10^{-5}\})$ there remains a compelling argument for employing \texttt{SS-POUNDers} over \texttt{POUNDers} across most of the test set, \emph{even in the worst case}. 
These are encouraging results, but suggest further empirical studies to better a priori quantify the dimension $n$ at which \texttt{SS-POUNDers} is, with high probability, more efficient in its use of function evaluations than \texttt{POUNDers}. 

\section{Conclusion}
In this paper, we presented a novel randomized algorithm for DFO, which we named \emph{basis sketching}. 
The algorithm employs a running average estimator of an interpolation model gradient and Hessian, as well as an unbiased counterpart, the ameliorated estimator. 
By using these estimators, basis sketching does \emph{not} require the maintenance of a fully linear model in every iteration of a model-based trust region method.
Using \texttt{POUNDers} as a starting point, we implemented a basis sketching method called \texttt{SS-POUNDers}, which never employs geometry improvement to yield fully linear models, but instead only ensures fully linearity restricted to randomized subspaces realized from judiciously chosen probability distributions. 
Our numerical results demonstrated, on problems with roughly $n=100$ variables, a fairly clear preference for using \texttt{SS-POUNDers} over \texttt{POUNDers}. 

This work was supported in part by the U.S.~Department of Energy, Office of Science, Office of Advanced Scientific Computing Research Applied Mathematics and SciDAC programs under Contract Nos.~DE-AC02-06CH11357 and DE-AC02-05CH11231.

\appendix
\section{Proof of \Cref{thm:sfullylinear}}\label{sec:proof1}
\begin{proof}
    For simplicity of notation, let 
    $$e^m(d) = m(y^0 + S^\top d) - f(y^0 + S^\top d) \quad
    \text{ and } \quad 
    e^g(d) = S\nabla m(y^0 + S^\top d) - S\nabla f(y^0 + S^\top d)$$
    for $d\in\Reals^p$.
    Without loss of generality, shift all points in $Y$ by $y^0$ so that $y^0 = 0$ (and hence, $\{y^0,y^1,\dots,y^p\} \subset \cB(0_p; c\Delta)$, 
    and each column of $\tilde{Y}$ is $Sy^i$). 
    Towards proving a result about $S$-full linearity, 
    suppose that $d$ satisfies $\|S^\top d\|\leq c\Delta$. 
    By \Cref{ass:cd} and our assumption on $m$, we may take a first-order Taylor expansion of $f$ about $y^0$, and so for $i=0,1,\dots,p$ we have

    \begin{equation}\label{eq:intermediate_eq1}
    \begin{array}{rl}
    \langle e^g(d), Sy^i - d\rangle & = 
    \displaystyle\int_0^1 \langle S\nabla f(y^0 + S^\top d + t(y^i - S^\top d)) - S\nabla f(y^0 + S^\top d), Sy^i - d\rangle \mathrm{d}t \\
    & - \displaystyle\int_0^1 \langle S\nabla m(y^0 + S^\top d + t(y^i - S^\top d)) - S\nabla m(y^0 + S^\top d), Sy^i -  d\rangle \mathrm{d}t\\
    & - e^m(d). 
    \end{array}
    \end{equation}
    Subtracting \eqref{eq:intermediate_eq1} with $i=0$ from each of \eqref{eq:intermediate_eq1} with $i=1,\dots,p$, we have
    \begin{equation}\label{eq:intermediate_eq2}
    \begin{array}{rl}
        \langle e^g(d), Sy^i\rangle = &
        \displaystyle\int_0^1 \langle S\nabla f(y^0 + S^\top d + t(y^i - S^\top d)) - S\nabla f(y^0 + S^\top d), Sy^i - d\rangle \mathrm{d}t \\
    & - \displaystyle\int_0^1 \langle S\nabla m(y^0 + S^\top d + t(y^i - S^\top d)) - S\nabla m(y^0 + S^\top d), Sy^i - d\rangle \mathrm{d}t\\
    & - \displaystyle\int_0^1 \langle S\nabla f(y^0 + (1-t)S^\top d) - S\nabla f(y^0 + S^\top d), -d\rangle\mathrm{d}t \\
    & + \displaystyle\int_0^1 \langle S\nabla m(y^0 + (1-t)S^\top d) - S\nabla m(y^0 + S^\top d), -d\rangle\mathrm{d}t
    \end{array}
    \end{equation}
    for $i=1,\dots,p$. 
    We now bound each row of the right hand side of \eqref{eq:intermediate_eq2} and will combine the bounds by Cauchy-Schwarz inequality. 
    The first row can be bounded as 
    $$
    \begin{array}{rl}
    & \displaystyle\int_0^1 \langle S\nabla f(y^0 + S^\top d + t(y^i - S^\top d)) - S\nabla f(y^0 + S^\top d), Sy^i -  d\rangle \mathrm{d}t\\
    = & \displaystyle\int_0^1 \langle \nabla f(y^0 + S^\top d + t(y^i - S^\top d)) - \nabla f(y^0 + S^\top d), S^\top Sy^i -  S^\top d\rangle \mathrm{d}t\\
    = & \displaystyle\int_0^1 \langle \nabla f(y^0 + S^\top d + t(y^i - S^\top d)) - \nabla f(y^0 + S^\top d), y^i -  S^\top d\rangle \mathrm{d}t\\
    \leq & \displaystyle\int_0^1 \|\nabla f(y^0 + S^\top d + t(y^i - S^\top d)) - \nabla f(y^0 + S^\top d)\|\|y^i - S^\top d\|\mathrm{d}t \\
    \leq & \displaystyle\int_0^1 L_g \|y^i - S^\top d\|^2 t \mathrm{d}t 
    = \frac{1}{2} L_g \|y^i - S^\top d\|^2 \leq 2L_g c^2\Delta^2,
    \end{array}
    $$
    where the second equality comes from the mutual orthonormality of the rows of $S$ and the fact that $y^i = S^\top \delta^i$ for some $\delta^i$, and the last line comes from the Lipschitz continuity of \Cref{ass:cd} and the fact that $\|y^i-S^\top d\|\leq \|y^i\| + \|S^\top d\| \leq 2c\Delta$.
    Bounding the remaining three lines of the right hand side of \eqref{eq:intermediate_eq2} is similar; 
    we arrive at a final bound of 
    $$\langle e^g(d), Sy^i\rangle \leq \frac{5}{2}c^2(L_g + L_{mg})\Delta^2.$$
    Using our matrix notation, the left hand side of these $p$ equations can be written
    $\tilde{Y}^\top e^g(d)$, and so we have the trivial bound
    $$\|\tilde{Y}^\top e^g(d)\| \leq \sqrt{p}\frac{5}{2}c^2(L_g + L_{mg})\Delta^2.$$
    By our supposition on $\|\tilde{Y}^{-1}\|$, 
    $$\|e^g(d)\| = \|\tilde{Y}^{-1}\tilde{Y}e^g(d)\| \leq \|\tilde{Y}^{-1}\|\|\tilde{Y}e^g(d)\| \leq \sqrt{p}\frac{5\Lambda}{2}c(L_g + L_{mg})\Delta,$$
    which is the value of $\kappa_{eg}$ that we intended to show. 

    To derive the value of $\kappa_{ef}$, 
    we return to \eqref{eq:intermediate_eq1} for $i=0$ and note that
    $$
    e^m(d) 
    = 
    \displaystyle\int_0^1 \langle S\nabla f( (1-t) S^\top d) - S\nabla f(S^\top d),  - d\rangle \mathrm{d}t 
    - \displaystyle\int_0^1 \langle S\nabla m((1-t)S^\top d) - S\nabla m( S^\top d), -  d\rangle \mathrm{d}t
    + \langle e^g(d), d\rangle,
    $$
    and so by similar reasoning to our previous derivations,
    $$\begin{array}{rl}
    |e^m(d)| \leq 2(L_g + L_{mg})c^2\Delta^2 + \|e^g(d)\|\|d\| & \leq 2(L_g + L_{mg})c^2\Delta^2 + \sqrt{p}\frac{5\Lambda}{2}c^2(L_g + L_{mg})\Delta^2.\\
    & = \displaystyle\frac{4 + 5\Lambda\sqrt{p}}{2}(L_g + L_{mg})c^2\Delta^2,\\
    \end{array}
    $$
    as we meant to show. 
\end{proof}

\section{\Cref{alg:interpolation_set}}

\begin{algorithm}[h!]
    \caption{Determine Interpolation Set $Y$}
    \label{alg:interpolation_set}
    \textbf{Input:} Subspaces $S$ and $S^\perp$, initial set of points $Z = \{z^1=x, z^2, \dots, z^{rank(S) + 1}\}\subset\Reals^n$, bank of evaluated points $\cY = \{(y^1,f(y^1),\dots,(y^{|\cY|},f(y^{|\cY|}))\}$ satisfying $(z^i, f(z^i))\in\cY$ for all $i=1,\dots,|Z|$, trust region radius $\Delta$.\\
    \textbf{Initialize: } Choose algorithmic constants $c\geq 1$, $\theta_2 > 0$.\\
    Compute QR decomposition of $M(\Phi_S,Z)$, initialize $N = \emptyset$.\\
    \For{$i=1,\dots,|\cY|$}{
        \If{$y^i\not\in Z$ \text{ and } $\|y^i-x\|\leq c\Delta$}
        {
            Compute $N_+$ as in \eqref{eq:nplus}.\\
            \If{$\sigma_{\min}(N_+) \geq\theta_2$}{
                $Z \gets Z\cup \{y^i\}$\\
                $N \gets N_+$\\
            }
        }
    }
\end{algorithm}
\Cref{alg:interpolation_set} effectively operates by considering the effect of adding points to a partitioning of the Vandermonde matrix employed in the constraint of \eqref{eq:bask}.
We note that up to notation and subspace considerations, the statement and subsequent development of \Cref{alg:interpolation_set} is a very close analogue of \cite{SW08}[Algorithm 4.2].  

To ease our notation, for a given pair of orthogonal subspaces $S,S^\perp$, and for a given interpolation set $Y$ of size $p$, we will slightly abuse previous notation and abbreviate
$$M_S(Y) = M(\Phi_S,Y) \quad \text{ and } \msperp(Y) = M(\Phi_{S^\perp} \cup \{\Phi_Q\setminus\Phi_L\}, Y).$$
We can easily see that the stationarity and primal feasibility conditions from the KKT conditions of \eqref{eq:bask} can be arranged as the saddle-point system
\begin{equation}
    \label{eq:kkt_bask}
    \left[
    \begin{array}{cc}
     -\msperp(Y)\msperp(Y)^\top & M_S(Y)\\
     M_S(Y)^\top & 0\\
    \end{array}
    \right]
    \left[
    \begin{array}{c}
    \lambda\\
    \alpha\\
    \end{array}
    \right]
    =
    \left[
    \begin{array}{c}
    f(Y)\\
    0\\
    \end{array}
    \right],
\end{equation}
Let $N$ denote an orthogonal basis for the null space $\mathcal{N}(M_S(Y)^\top)$
and let $QR = M_S(Y)$ be a QR factorization. 
Because the second row of \eqref{eq:kkt_bask} indicates that $\lambda\in\mathcal{N}(M_S(Y)^\top)$, 
we can write $\lambda = N\omega$ for $\omega\in\Reals^{p-rank(S)-1}$ and so \eqref{eq:kkt_bask} reduces to $p$ equations
\begin{equation}
\label{eq:kkt_bask2}
\begin{array}{rcl}
    N^\top \msperp(Y)\msperp(Y)^\top N\omega & = & N^\top f(Y)\\
    R\alpha & = & Q^\top(f(Y) - \msperp(Y)\msperp(Y)^\top N\omega).\\
    \end{array}
\end{equation}
We can now state and prove a very close analogue of \cite{SW08}[Theorem 3.2].
\begin{theorem}
    \label{thm:want_pd}
    Suppose $s := rank(S) \geq 2$. If both
    \begin{itemize}
        \item $rank(M_S(Y)^\top) = s + 1$, and
        \item $N^\top\msperp(Y)\msperp(Y)^\top N$ is positive definite,
    \end{itemize}
    then, for any $f(Y)\in\Reals^p$, there exists a unique solution $(\alpha^*, \beta^*, \gamma^*)$ to \eqref{eq:bask}. 
\end{theorem}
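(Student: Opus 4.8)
The plan is to argue entirely through the (necessary and sufficient) optimality conditions of \eqref{eq:bask}. Since \eqref{eq:bask} minimizes the convex quadratic $\tfrac12\|\beta\|^2+\tfrac12\|\gamma\|^2$ subject to affine equality constraints, the KKT conditions are necessary for optimality because the constraints are affine, and sufficient because \eqref{eq:bask} is a convex program; moreover the objective is bounded below by $0$, so a minimizer exists whenever the feasible set is nonempty, and it is unique exactly when the saddle-point system \eqref{eq:kkt_bask} (equivalently, its reduction \eqref{eq:kkt_bask2}) has a unique solution. So the proof splits into two parts: (a) show the constraint of \eqref{eq:bask} is consistent for every $f(Y)\in\Reals^p$, so a minimizer exists; and (b) show \eqref{eq:kkt_bask2} has a unique solution $(\omega^*,\alpha^*)$, which then propagates back to a unique $(\lambda^*,\alpha^*,\beta^*,\gamma^*)$.

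For (a): because $Q_k$ is orthonormal, $\Phi_S\cup\Phi_{S^\perp}$ is a basis of $\cP^1_n$, so $\Phi_S\cup\Phi_{S^\perp}\cup(\Phi_Q\setminus\Phi_L)$ and $\Phi_Q$ are two bases of the same quadratic space; hence $[\,M_S(Y)\ \ \msperp(Y)\,]$ is a column reparametrization of $M(\Phi_Q,Y)$ and in particular has the same row rank. I would then show this row rank equals $p$: if $v\in\Reals^p$ lies in the left null space of $[\,M_S(Y)\ \ \msperp(Y)\,]$, then $M_S(Y)^\top v=0$, so $v=N\omega$ for some $\omega$, and $\msperp(Y)^\top v=0$, so $0=\omega^\top N^\top\msperp(Y)\msperp(Y)^\top N\omega$; by the positive-definiteness hypothesis $\omega=0$, hence $v=0$. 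Full row rank gives solvability of the constraint $M_S(Y)\alpha+M(\Phi_{S^\perp},Y)\gamma+M(\Phi_Q\setminus\Phi_L,Y)\beta=f(Y)$ for every right-hand side, so \eqref{eq:bask} is feasible and a minimizer exists.

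For (b): I would reproduce the reduction already sketched before \eqref{eq:kkt_bask2}. Stationarity forces $\beta^*=M(\Phi_Q\setminus\Phi_L,Y)^\top\lambda^*$, $\gamma^*=M(\Phi_{S^\perp},Y)^\top\lambda^*$, and $M_S(Y)^\top\lambda^*=0$, so $\lambda^*=N\omega^*$; projecting the primal-feasibility block of \eqref{eq:kkt_bask} onto the orthogonal decomposition $\Reals^p=\operatorname{range}(M_S(Y))\oplus\operatorname{range}(N)$ produces the two equations of \eqref{eq:kkt_bask2}. The first, $N^\top\msperp(Y)\msperp(Y)^\top N\,\omega^*=N^\top f(Y)$, has a unique solution because its coefficient matrix is positive definite, hence invertible; this pins down $\omega^*$, and therefore $\lambda^*$, $\beta^*$, and $\gamma^*$. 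The second, $R\alpha^*=Q^\top(f(Y)-\msperp(Y)\msperp(Y)^\top N\omega^*)$ with $QR=M_S(Y)$ a thin QR factorization, has a unique solution because $rank(M_S(Y)^\top)=s+1$ equals the number of columns of $M_S(Y)$, making the upper-triangular factor $R$ square and nonsingular. Conversely, the tuple so obtained solves \eqref{eq:kkt_bask}, since the two equations of \eqref{eq:kkt_bask2} are exactly the two orthogonal components of its first block row and its second block row is $M_S(Y)^\top N\omega^*=0$. Hence \eqref{eq:kkt_bask} has a unique solution and the minimizer of \eqref{eq:bask} is unique.

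I expect the only subtle point to be the uniqueness of $\alpha^*$: the objective of \eqref{eq:bask} does not depend on $\alpha$, so convexity contributes nothing there, and $\alpha^*$ is determined solely through the full-column-rank hypothesis $rank(M_S(Y)^\top)=s+1$ (invertibility of $R$). Some care is also warranted in verifying that a solution of the reduced system \eqref{eq:kkt_bask2} genuinely yields a solution of the full system \eqref{eq:kkt_bask}, rather than merely being implied by it; this is handled cleanly by using that the basis $N$ of $\mathcal N(M_S(Y)^\top)=\operatorname{range}(M_S(Y))^\perp$ supplies exactly the complementary orthogonal projection to $QQ^\top$.
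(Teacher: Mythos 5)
Your proposal is correct and follows essentially the same route as the paper: feasibility for every $f(Y)$ via triviality of the left null space of $[\,M_S(Y)\ \ \msperp(Y)\,]$ (your argument is literally the paper's, just spelled out), followed by uniqueness through the KKT system and its reduction \eqref{eq:kkt_bask2}. The one place you go beyond the paper is worth noting: the paper disposes of uniqueness with a single appeal to convexity in $(\beta,\gamma)$, which does not by itself pin down $\alpha^*$ since the objective is independent of $\alpha$; your explicit use of $rank(M_S(Y)^\top)=s+1$ to make $R$ square and nonsingular in the second equation of \eqref{eq:kkt_bask2} supplies exactly the missing justification, and is where that hypothesis actually earns its keep.
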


\begin{proof}
    Let both suppositions hold. 
    Because $N^\top\msperp(Y)\msperp(Y)^\top N$ is positive definite, we have that $\msperp(Y)^\top N$ is full rank. 
    Because $s\geq 2$, $\msperp(Y)^\top N$ being full rank in turn implies that its nullspace satisfies $\mathcal{N}(\msperp(Y)^\top N) = \{0_{p-s-1}\}$.
    Since the columns of $N$ form a basis for $\mathcal{N}(M_S(Y)^\top)$, 
    this implies that $\mathcal{N}(\msperp(Y)^\top)\cap \mathcal{N}(M_S(Y)^\top) = \{0_{p-s-1}\}$,
    which means that the full Vandermonde matrix $M(\Phi, Y) = [M_S(Y) \hspace{0.5pc} \msperp(Y)]$ is full rank. 
    Thus, the feasible region of \eqref{eq:bask} is nonempty. 

    Because the objective of \eqref{eq:bask} is convex in $\beta$ and $\gamma$, both he optimal solution $(\alpha^*,\beta^*,\gamma^*)$ to \eqref{eq:bask} and the Lagrange multipliers $\lambda$ in \eqref{eq:kkt_bask} are unique, as we meant to show. 
\end{proof}

With \Cref{thm:want_pd} in hand, we see that the intention of \Cref{alg:interpolation_set} is two-fold; it is designed to ensure that the second condition of \Cref{thm:want_pd} holds, 
while maintaining reasonable conditioning of the linear system in \eqref{eq:kkt_bask2}.
\Cref{alg:interpolation_set} computes an initial QR decomposition of the square matrix $M_S(Z)$ and maintains in the matrix $N$ an orthogonal basis for $\mathcal{N}(M_S(Z)^\top)$.
In a greedy fashion, if a point $y^i$ in the bank $\cY$ is not already in the interpolation set $Z$ and is sufficiently close to $x$, we consider the effect of adding it to $Z$.
By performing $s+1$ many Givens rotations to the initial QR factorization of $M(\Phi_S,Y)$,
we obtain an orthogonal basis for $\mathcal{N}(M_S(Y \cup \{y^i\})^\top)$ of the form 
$$N_+ = \left[
\begin{array}{cc}
N & Q\phi_1\\
0 & \phi_2\\
\end{array}
\right].$$
Thus, we can easily update
\begin{equation}
\label{eq:nplus}
    \msperp(Y \cup \{y^i\})^\top N_+ = \left[\msperp(Y)^\top N, \hspace{1pc} \msperp(Y)^\top Q\phi_1 + \phi_2\msperp(\{y^i\})^\top \right]
\end{equation}
Thus, by ensuring the least eigenvalue of $N_+$ is bounded away from 0 via the algorithmic parameter $\theta_2$, the output of \Cref{alg:interpolation_set} will guarantee that the second condition of \Cref{thm:want_pd} is met.
We note that the first condition is automatically met within the scope of \Cref{alg:bask}, since the initial set $Z$ will already contain the affinely independent points selected by \Cref{alg:subspace}, and adding to $Z$ cannot change the subspace that they span.  

\bibliography{bibs/smw-bigrefs.bib}
\bibliographystyle{abbrv} 

\framebox{\parbox{.90\linewidth}{\scriptsize The submitted manuscript has been created by
        UChicago Argonne, LLC, Operator of Argonne National Laboratory (``Argonne'').
        Argonne, a U.S.\ Department of Energy Office of Science laboratory, is operated
        under Contract No.\ DE-AC02-06CH11357.  The U.S.\ Government retains for itself,
        and others acting on its behalf, a paid-up nonexclusive, irrevocable worldwide
        license in said article to reproduce, prepare derivative works, distribute
        copies to the public, and perform publicly and display publicly, by or on
        behalf of the Government.  The Department of Energy will provide public access
        to these results of federally sponsored research in accordance with the DOE
        Public Access Plan \url{http://energy.gov/downloads/doe-public-access-plan}.}}

\end{document}